\def\qed{\hfill $\vcenter{\hrule height .3mm
\hbox {\vrule width .3mm height 2.1mm \kern 2mm \vrule width .3mm
height 2.1mm} \hrule height .3mm}$ \bigskip}
\def \RR {\mathbb R}
\def \EE {\mathbb E}
\def \PP {\mathbb P}
\def \QQ {\mathbb Q}
\def \cE {\mathcal E}
\def \cL {\mathcal L}
\def \FF {\mathcal{F}}
\def \Var {\mathrm{Var}}
\newtheorem{theorem}{Theorem}
\newtheorem{lemma}[theorem]{Lemma}
\newtheorem{corollary}[theorem]{Corollary}
\theoremstyle{definition}
\newtheorem{definition}[theorem]{Definition}
\theoremstyle{remark}
\newtheorem{remark}[theorem]{Remark}
\newcommand{\corFK}{}
\def\corON{}
\def\RE{}
\long\def\symbolfootnotetext[#1]#2{\begingroup
\def\thefootnote{\fnsymbol{footnote}}\footnotetext[#1]{#2}\endgroup}
\begin{document}

\title{A Spectral Condition for Spectral Gap: Fast Mixing in High-Temperature Ising Models}
\author{Ronen Eldan\thanks{Weizmann Institute of Science. Supported by a European Research Council Starting Grant (ERC StG) grant agreement no. 803084 and by an Israel Science Foundation grant no. 715/16. Email: ronen.eldan@weizmann.ac.il.} \and Frederic Koehler\thanks{Massachusetts Institute of Technology. This work was supported in part by NSF CAREER Award CCF-1453261, NSF Large CCF-1565235, Ankur Moitra’s ONR Young Investigator Award, and European Research Council (grant no. 803084). Email: fkoehler@mit.edu (corresponding author).} \and Ofer Zeitouni\thanks{Weizmann Institute of Science. This project has received funding from the European Research Council (ERC) under the European Union's Horizon 2020 research and innovation programme (grant agreement No. 692452). Email: ofer.zeitouni@weizmann.ac.il.}}
\date{}
\maketitle
\begin{abstract}
  \corON{We prove that Ising models on the hypercube with general quadratic 
  interactions satisfy a Poincar\'{e} inequality with respect
  to the natural Dirichlet form corresponding to Glauber dynamics, as soon as 
  the operator norm of the interaction matrix is smaller than $1$. 
  The inequality
implies a control on the mixing time of the Glauber dynamics. Our techniques rely on a localization procedure which establishes a structural result, stating that Ising measures may be decomposed into a mixture of measures with quadratic potentials of rank one, and provides a framework for proving concentration bounds for high temperature Ising models.}
\end{abstract}

\section{Introduction}
In this paper we study the high temperature behavior of the Sherrington-Kirkpatrick model and more general Ising models, especially with regards to mixing of the Glauber dynamics (i.e. Gibbs sampling) chain. More precisely, if $\mu$ is the uniform measure over the hypercube $\{\pm 1\}^n$, we consider a general Ising model of the form
\begin{equation}\label{eqn:ising}
  \frac{\corON{d\nu_0}}{d\mu}(x) =\frac1Z \exp\left(\frac{1}{2}\langle x, J x \rangle + \langle h, x \rangle\right)
\end{equation}
for an arbitrary symmetric quadratic interaction matrix $J$ and \emph{external field} $h \in \mathbb{R}^n$, \corON{where $Z$, the partition function,
is a normalization constant.} Because \corON{the evaluation of the partition function $Z$ is a difficult computational task,} 
in practice samples from \eqref{eqn:ising} are generally constructed by simulating a Markov chain such as the \emph{Glauber dynamics}, where at each step (in discrete time) a \corON{site} $i$ is chosen uniformly at random from $[n]$ and the \corON{random}
spin $X_i$ is resampled from its conditional law given $X_{\sim i}$. \corON{(Here and throughout, we write $x_{\sim i}$ for the
collections $\{x_j\}_{j\neq i}$, with similar notation for $X_{\sim i}$.)}

The behavior of Glauber dynamics in the Ising model is a classical and well-studied topic with rich connections to structural properties of the Gibbs measure $\nu$ and concentration of measure. As far as sufficient conditions for fast mixing are concerned, one of the most general and well-known situations where rapid mixing is guaranteed is under \emph{Dobrushin's uniqueness condition} \cite{dobrushin-uniqueness}, which requires that $\|J\|_{\infty \to \infty} < 1$ or equivalently that $\sum_{j} |J_{ij}| < 1$ for all rows $i$. Unfortunately, even though there exist situations where this bound is tight (the mean-field/Curie-Weiss model), in other situations of interest this bound is far from tight. 

One notable model where Dobrushin's condition is 
\corON{not satisfied at interesting situations}
is the celebrated \emph{Sherrington-Kirkpatrick} (SK) model from spin glass theory \cite{talagrand2010mean}. In the SK model, $J$ is given by a rescaled matrix from the Gaussian Orthogonal Ensemble so that $J$ is symmetric with off-diagonal entries $J_{ij} \sim \mathcal{N}(0, \beta^2/n)$, where $\beta > 0$ is a parameter specifying the \emph{inverse temperature} of the model. Here the expected $\ell_1$ norm of a row of $J$ is on the order of $\beta \sqrt{n}$, so that Dobrushin's uniqueness condition only holds under the restrictive condition $\beta = O(1/\sqrt{n})$. Nevertheless, it is expected that in reality the Glauber dynamics are actually fast mixing for all sufficiently small constant $\beta = O(1)$ (i.e. not shrinking with $n$). 
\corON{We indeed prove this below, see Theorem \ref{theo-11} and Section \ref{sec-examples}.} 

In the classical case of ferromagnetic Ising models on a lattice, \corON{it is}
known that there are close connections between rapid mixing of the Glauber dynamics and  functional inequalities such as the log-Sobolev inequality. In a recent breakthrough result, Bauerschmidt and Bodineau \cite{bauerschmidt2019very} proved a form of the log-Sobolev inequality for the SK model at sufficiently high temperature ($\beta < 0.25$). More precisely, they proved that if $J$ is a positive semidefinite matrix \corON{of operator norm $\|J\|_{OP}$,} 
then \corON{for any probability measure $\rho$ on $\{\pm 1\}^n$,}
\begin{equation}\label{eqn:log-sobolev-bb}
  \corON{D(\rho || \nu_0)} \lesssim \frac{1}{1 - \|J\|_{OP}} \sum_{i = 1}^n \EE_{\nu} \left|\partial_i \sqrt{\frac{d\rho}{\corON{d\nu_0}}}(X)\right|^2 
\end{equation}
for any model of the form \eqref{eqn:ising}, where $D(\rho ||\corON{ \nu_0}) = \EE_{\rho} \log \frac{d\rho}{\corON{d\nu_0}}$ is the \emph{relative entropy} and $\partial_i f(x) = f(x_{\sim i}, x_i = 1) - f(x_{\sim i}, x_i = -1)$ is the discrete gradient on the hypercube. By a standard argument (see e.g. \cite{ledoux1999concentration,van2014probability}), this implies the following Poincar\'e-type inequality
\begin{equation}\label{eqn:poincare-bb}
  \Var(\varphi) \lesssim \frac{1}{1 - \|J\|_{OP}} \sum_{i = 1}^n \EE_{\corON{\nu_0}} \left|\partial_i \varphi(X)\right|^2 
\end{equation}
as well. Their proof is based upon \corON{an}
explicit decomposition of the measure \corON{$\nu_0$} into a mixture of product measures. 

However, in the case of the SK model the estimates \eqref{eqn:log-sobolev-bb} and \eqref{eqn:poincare-bb} are not known to imply polynomial time bounds on the mixing time (or relaxation time) of Glauber dynamics. The reason is a subtle discrepancy between different notions of discrete gradients. \RE{A simple example which illustrates this is the uniform measure $\mu_{\mathrm{even}}$ on the set of vertices with even parity, $\{(x_1,\dots,x_n \in \{\pm 1\}^n;  \prod_i x_i = 1 \}$. It is not hard to check that the right hand side of \eqref{eqn:poincare-bb} remains unchanged if the measure $\mu_{\mathrm{even}}$ is replaced by the uniform measure $\mu$, therefore, if we apply the law of total variance, we see that $\mu_{\mathrm{even}}$ satisfies a Poincar\'e-type inequality of this form. On the other hand, the Glauber dynamics with respect to this measure is trapped at one vertex.} 

For the Glauber dynamics, having spectral gap $\gamma$ (or equivalently, relaxation time $1/\gamma$) is equivalent to the following Poincar\'e inequality (see e.g. \cite{van2014probability}):
\begin{equation}\label{eqn:poincare-glauber}
  \Var(\varphi) \le \frac{1}{\gamma} \cE_{\corON{\nu_0}}(\varphi,\varphi) := \frac{1}{\gamma} \EE_{\corON{\nu_0}} \sum_{i = 1}^n (\EE_{\corON{\nu_0}}[\varphi(X) | X_{\sim i}] - \varphi(X))^2
\end{equation}
where the rhs $\cE_{\corON{\nu_0}}(\varphi,\varphi)$ 
here is the \emph{Dirichlet form} corresponding to the continuous time Glauber dynamics. \corFK{For distributions with full support on the hypercube $\{\pm 1\}^n$, the right hand side of \eqref{eqn:poincare-bb} can be realized as the Dirichlet form for a certain dynamics in continuous time, see \cite[Equation (3.6)]{martinelli1999lectures}, but these dynamics can have a rate that is super-polynomial (see discussion below).}
  Similarly, the canonical log-Sobolev inequality for the Glauber semigroup in the sense of Gross 
  \cite{gross1975logarithmic}, which implies \eqref{eqn:poincare-glauber} as well as rapid mixing, 
replaces the sum on right hand side of \eqref{eqn:log-sobolev-bb} 
by the Dirichlet form $\cE_{\corON{\nu_0}}\left(\sqrt{\frac{d\rho}{d\corON{\nu_0}}}, \sqrt{\frac{d\rho}{d\corON{\nu_0}}}\right)$. For some models, 
the discrepancy between the rhs of \eqref{eqn:poincare-bb} and \eqref{eqn:poincare-glauber} is at most a constant factor and so the difference between the Dirichlet form \corFK{of the Glauber dynamics} and the $\ell_2$-norm of the usual discrete gradient can be disregarded. 

Unfortunately, for the SK model it turns out the right hand side of \eqref{eqn:poincare-bb} can be size $e^{\Theta(\beta \sqrt{n})}$ larger than the right hand side of \eqref{eqn:poincare-glauber} in some simple examples. (We give such an example in Appendix~\ref{apdx:dirichlet-comparison}.) On the other hand, it is not hard to show that in the reverse direction, the rhs of \eqref{eqn:poincare-glauber} is never bigger than \eqref{eqn:poincare-bb} by more than a constant factor, so that \eqref{eqn:poincare-glauber} is a stronger estimate. \corFK{This inequivalence of the two Dirichlet forms reflects the fact that the rate of the continuous-time dynamics corresponding to \eqref{eqn:poincare-bb} can be as large as $e^{\Theta(\beta \sqrt{n})}$.}

The main result of this paper is \corON{a proof of}
the Poincar\'e inequality \eqref{eqn:poincare-glauber} with $\gamma = 1 - \|J\|_{OP}$ (for $J$ psd, as before), from which we obtain polynomial bounds on the mixing time of the Glauber dynamics. It is unclear how to obtain such a result from the product measure decomposition used to prove \eqref{eqn:log-sobolev-bb}, so a key technical idea in our work is the construction of a new 
\corON{decomposition of the measure $\nu$}
as a mixture of \emph{rank-one} Ising models (i.e. where $J$ has rank one). This can be thought of as a natural analogue of the \emph{needle decomposition} used in convex geometry \cite{kannan1995isoperimetric}. \RE{A structural theorem of this form, however not used directly in our result, is formulated in Section \ref{sec:needle} below}. The needle decomposition itself is generated by a natural stochastic process (a version of stochastic localization \cite{eldan2013thin}) and the smooth nature of the decomposition allows us to explicitly analyze the evolution of the Dirichlet form along this process, allowing us to prove the result. 

\corON{In the next section, 
  we formulate and prove our basic Poincar\'{e} inequality,
Theorem \ref{thm:poincare}.} \corFK{The related Appendix~\ref{apdx:dirichlet-comparison} discusses  the inequivalence between the Dirichlet forms $\cE_\nu(\varphi,\varphi)$ and 
$\EE|\nabla \varphi(X)|^2$.}
\corON{Section \ref{sec-4} is devoted to the estimate
on mixing time, Theorem \ref{theo-11}. In Section \ref{sec:needle} we outline a structural theorem in the spirit of the needle-decompositions mentioned above. Finally, Section \ref{sec-examples} is devoted to examples. }

\textbf{Acknowledgements.} We would like to thank Fanny Augeri for enlightening discussions. We also thank Roland Bauerschmidt for some useful comments. \corFK{We thank Ahmed El Alaoui, Heng Guo, Vishesh Jain, and the anonymous reviewers for useful feedback.}

\section{Poincar\'e Inequality}
\label{sec-poincare}
\corON{Recall that $\mu$ denotes the uniform measure on $\{\pm 1\}^n$, and that
for a matrix $J$ and a vector $h$, the Ising measure is defined as}
\begin{equation}\label{eqn:nu0}
d \nu_0(x) = \frac{1}{Z} e^{\frac{1}{2}\langle x, J x \rangle + \langle h, x \rangle} d \mu.
\end{equation}
We can clearly assume without loss of generality that $J$ is \corON{symmetric and} positive definite, which we 
do henceforth.
For any measure $\nu$ on $\{\pm 1\}^n$, we define the \emph{Dirichlet form}
\begin{equation}\label{eqn:dirichlet-form}
\cE_{\nu}(\varphi,\varphi) = \EE_{\nu} \sum_{i = 1}^n (\EE_{\nu}[\varphi(X) \mid X_{\sim i}] - \varphi(X))^2,
\end{equation}
where the associated generator of the Glauber dynamics is
\[ (\cL_{\nu} \varphi)(x) = \sum_{i = 1}^n \big(\EE_{\nu}[\varphi(X) \mid X_{\sim i} = x_{\sim i}] - \varphi(x)\big). \]
The main result of this section is a dimension-free Poincar\'e inequality for $\nu_0$ under the Glauber dynamics, provided that $\|J\|_{OP} < 1$. 
\begin{theorem}\label{thm:poincare}
For $\nu_0$ as in \eqref{eqn:nu0} with $0 \preceq J \prec \mathrm{Id}$ and any test function $\varphi : \{\pm 1\}^n \to \mathbb{R}$, we have the following inequality:
\[ (1 - \|J\|_{OP})\Var_{\nu_0}(\varphi(X)) \le \cE_{\nu_0}(\varphi,\varphi). \]
\end{theorem}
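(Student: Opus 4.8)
The plan is to run a stochastic localization argument. Using that $J\succ0$, the Hubbard--Stratonovich identity $e^{\frac12\langle x,Jx\rangle}=\EE_{z\sim\mathcal N(0,J)}e^{\langle z,x\rangle}$ exhibits $\nu_0$ as the $x$-marginal of a joint law on $(x,z)$ with $x\sim\nu_0$ and $z$ Gaussian. Introduce an independent Brownian motion $B$, set $Y_t=tz+B_t$, $\mathcal G_t=\sigma(Y_s:s\le t)$, and let $\nu_t$ be the conditional law of $x$ given $\mathcal G_t$. Integrating out the Gaussian $z$ shows that $\nu_t$ is again an Ising measure as in \eqref{eqn:nu0}, now with interaction matrix $\Sigma_t=(J^{-1}+t\,\mathrm{Id})^{-1}$ and an explicit random external field; in particular $\|\Sigma_t\|_{OP}=\|J\|_{OP}/(1+t\|J\|_{OP})$ is nonincreasing in $t$, stays strictly below $1$, and vanishes as $t\to\infty$, at which point $\nu_t$ converges to the (random) product measure with field $h+z$. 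By the tower property $(\nu_t)$ is a measure-valued martingale, so $M_t:=\EE_{\nu_t}\varphi$ is a martingale for each test function $\varphi$; a filtering computation gives $d\langle M\rangle_t=\langle c_t,\Sigma_t^2c_t\rangle\,dt$ with $c_t:=\mathrm{Cov}_{\nu_t}(\varphi,X)$, so that $\Sigma_t$ --- equivalently, the spectral data of $J$ --- is built into the quadratic variation. (One could equally stop the construction after revealing only $n-1$ of the eigendirections of $z$, landing on a mixture of rank-one Ising models, the structural decomposition referred to in the introduction; for the Poincar\'e inequality, running the flow all the way down to product measures gives the cleanest base case.)

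Next I would use the variance identity: because $M$ starts deterministically, $\Var_{\nu_0}(\varphi)=\EE\langle M\rangle_t+\EE[\Var_{\nu_t}(\varphi)]$ for every $t$, hence, letting $t\to\infty$,
\[ \Var_{\nu_0}(\varphi)=\EE\int_0^\infty\langle c_t,\Sigma_t^2c_t\rangle\,dt+\EE[\Var_{\nu_\infty}(\varphi)]. \]
The last term is the base case: any product measure on $\{\pm1\}^n$ has Glauber spectral gap exactly $1$ (an elementary Fourier computation), so $\Var_{\nu_\infty}(\varphi)\le\cE_{\nu_\infty}(\varphi,\varphi)$ for each realization, and therefore $\EE[\Var_{\nu_\infty}(\varphi)]\le\EE[\cE_{\nu_\infty}(\varphi,\varphi)]$.

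The core of the proof is to propagate the Poincar\'e inequality backward along the flow with the right constant. I would track the functional
\[ g(t):=\EE[\Var_{\nu_t}(\varphi)]-\frac{1}{1-\|\Sigma_t\|_{OP}}\,\EE[\cE_{\nu_t}(\varphi,\varphi)], \]
whose value at $t=0$ is exactly the quantity the theorem asserts is nonpositive (since $\|\Sigma_0\|_{OP}=\|J\|_{OP}$), and which satisfies $g(\infty)\le0$ by the base case ($\|\Sigma_\infty\|_{OP}=0$). The plan is then a Gr\"onwall-type argument: $\frac{d}{dt}\EE[\Var_{\nu_t}(\varphi)]=-\EE\langle c_t,\Sigma_t^2c_t\rangle$ is already known, the derivative of $t\mapsto1/(1-\|\Sigma_t\|_{OP})$ is explicit, and the missing ingredient is an It\^o computation of $\frac{d}{dt}\EE[\cE_{\nu_t}(\varphi,\varphi)]$. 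Combining these I expect a differential inequality of the form $g'(t)+c(t)g(t)\ge0$ with an explicit, integrable $c(t)$, so that $t\mapsto g(t)\exp\!\big(\int_0^tc(s)\,ds\big)$ is nondecreasing; being $\le0$ at $t=\infty$ it is $\le0$ at $t=0$, which is the theorem.

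The hardest step is this It\^o computation, for two linked reasons. First, one must keep the \emph{genuine} Glauber Dirichlet form, $\cE_{\nu_t}(\varphi,\varphi)=\tfrac14\sum_i\EE_{\nu_t}[\Var_{\nu_t}(X_i\mid X_{\sim i})(\partial_i\varphi)^2]$: the crude bound $\langle c_t,\Sigma_t^2c_t\rangle\le\|\Sigma_t\|_{OP}^2\sum_i\EE_{\nu_t}(\partial_i\varphi)^2$ discards the conditional-variance weights and reproves only the weaker estimate \eqref{eqn:poincare-bb}, so one is forced to differentiate $\cE_{\nu_t}$ itself, which is delicate because the conditional probabilities $\nu_t(X_i=1\mid X_{\sim i})=\tanh\big(\sum_{j\ne i}(\Sigma_t)_{ij}x_j+(h_t)_i\big)$ drift with $t$ (and only the off-diagonal of $\Sigma_t$ enters here, whereas the full $\Sigma_t$ enters the covariance bookkeeping). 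Second, differentiating $\EE[\cE_{\nu_t}(\varphi,\varphi)]$ generates second-order It\^o terms in which the covariance matrix of $\nu_t$ is fed back through the interaction; controlling the operator norm of this feedback is exactly where the hypothesis $\|\Sigma_t\|_{OP}<1$ (equivalently $\|J\|_{OP}<1$) enters, and arranging that the resulting resummation yields the sharp \emph{linear} factor $1-\|J\|_{OP}$, rather than its square, is the subtle point.
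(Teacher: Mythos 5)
Your setup (Hubbard--Stratonovich localization, the martingale decomposition of the variance, the product-measure base case) is fine, but the proof has a genuine gap exactly where you flag it: the claimed differential inequality $g'(t)+c(t)g(t)\ge 0$ is never established, and it is the entire content of the theorem. To close your Gr\"onwall argument you must compare the rate of variance loss, $\EE\langle c_t,\Sigma_t^2c_t\rangle\,dt$ with $c_t=\mathrm{Cov}_{\nu_t}(\varphi,X)$, against $\EE[\cE_{\nu_t}(\varphi,\varphi)]$ with a constant that resums to the sharp factor $1-\|J\|_{OP}$. But bounding $\langle c_t,\Sigma_t^2c_t\rangle$ by the Glauber Dirichlet form of $\nu_t$ is essentially a Poincar\'e inequality for $\nu_t$ applied to $\varphi$ (or, via $|c_t|^2\le \Var_{\nu_t}(\varphi)\,\|\mathrm{Cov}_{\nu_t}(X)\|_{OP}$, a covariance bound for $\nu_t$), i.e.\ precisely the statement being proved, so the argument as sketched is circular unless a new input is supplied; you correctly note that the only non-circular bound you have, through $\EE_{\nu_t}|\nabla\varphi|^2$, loses the conditional-variance weights and can only reproduce the weaker inequality \eqref{eqn:poincare-bb}. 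The required It\^o computation of $\frac{d}{dt}\EE[\cE_{\nu_t}(\varphi,\varphi)]$ with the feedback of $\mathrm{Cov}_{\nu_t}$ through $\Sigma_t$ is not a routine step you can defer: no version of it yielding the linear factor $1-\|J\|_{OP}$ is given or known to follow from what you wrote.

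For contrast, the paper sidesteps this difficulty rather than solving it: the driving matrix of the localization is chosen as a smoothed projection onto $H_{J_t}\cap V_t^{\perp}$ (Lemma \ref{lem:cprops}), so that $|C_tV_t|\le\delta$ and hence $d[M]_t\le\delta^2\,dt$ --- essentially no variance is lost along the flow, so no quantitative comparison between variance loss and Dirichlet form is ever needed. The price is that the flow can only be run until $\mathrm{rank}(J_t)\le 1$ (the orthogonality constraint costs one dimension), so the terminal measures are rank-one Ising models rather than your product measures; these are handled by the $\ell_2$-Dobrushin Poincar\'e inequality of \cite{wu2006poincare} (Lemmas \ref{lem:influence-computation} and \ref{lem:poincare-rank-one}), with the bound $|U|^2\le\|J\|_{OP}$ producing the constant $1-\|J\|_{OP}$. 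Finally, only a one-sided statement about the Dirichlet form is needed --- that $\cE_{\nu_t}(\varphi,\varphi)$ is a supermartingale --- and this follows softly from the conductance representation \eqref{eqn:dirichlet-form-useful} and convexity (Lemma \ref{lem:supermartingale}), for any localization of this linear-tilt form, including yours; no lower bound on its decay rate is required. If you want to salvage your route (running the standard localization all the way to product measures), you would need an independent covariance or spectral-gap estimate for the intermediate measures $\nu_t$, which is the missing idea.
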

\begin{proof} The proof proceeds by a dynamical approach. It is
  clearly enough to consider $\varphi$ with Lipshitz norm $1$ and 
  $\varphi({\bf 1})=\varphi(( 1,\ldots,1))=0$, which implies that
$\|\varphi\|_\infty\leq n$.
 We will introduce a path of measures
\begin{equation}\label{eq:form}
d \nu_t(x) = e^{c_t + \frac{1}{2}\langle x, J_t x\rangle + \langle q_t + h, x \rangle } d \mu(x)=:F_t(x) d\nu_0(x),
\end{equation}
where $J_t,q_t$ are  processes, adapted
to the filtration  $\FF_t$ generated by an $n$-dimensional Brownian motion
$W_t$, and $c_t$ is a normalization
constant. For $\nu_t$ as in \eqref{eq:form}, introduce the barycenter
\begin{equation}
  \label{eq-satcor1}
a_t := \int x\ d \nu_t(x)
\end{equation}
and the test-function adjusted barycenter
\begin{equation}
  \label{eq-satcor2}
  V_t := \int \varphi(x) (x-a_t)\ d \nu_t\corON{(x)}.
\end{equation}

\RE{To define the process $F_t(x)$, we will first need the following technical result. Define by $\mathcal{H}$ the set of all linear subspaces of $\RR^n$.}

\begin{lemma}\label{lem:cprops}
	For every $\delta > 0$, there exists a function $C: \RR^n \times \mathcal{H} \to \mathbb{M}_{n \times n}$ which attains the following properties. For any linear subspace $H \subset \RR^n$,
	\begin{enumerate}
		\item For any $v$ the matrix $C(v,H)$ is positive semidefinite and $\mathrm{Im}(C(v,H)) \subseteq H$.
		\item The map $v \mapsto C(v,H)$ is Lipschitz continuous.
		\item If dim$(H)=d>1$
		then $\mbox{\rm Tr}(C(v,H))\geq d-1$.
		\item For any $v$ we have
		\begin{equation}
		\label{eq-bound1}
		 \corFK{|C(v,H) v| \le \delta}
		\end{equation}
	\end{enumerate}
\end{lemma}

\RE{If the continuity assumption is ignored, then one may simple take $C(v,H)$ as the \corFK{orthogonal} projection onto $H \cap v^\perp$, and for the sake on intuition, the reader may think of $C$ this way. Otherwise, the actual construction (and proof of the lemma) is postponed to subsection \ref{sec:technical}}.

Fix $\delta\ll 1$ (possibly depending on $n$), \RE{and let $C$ be the function provided by the above lemma}. For a \corFK{symmetric matrix $J$,}
introduce the subspace \corFK{$H_J$ spanned by the eigenvectors of $J$ corresponding to positive eigenvalues (when $J$ is positive semidefinite, this is $\mathrm{Im}(J)$).}

We are finally ready to introduce the dynamics for $F_t$, as the solution to the system of equations
\begin{eqnarray}
  \label{eq-satcor3}
  &&\frac{dJ_t}{dt}= -\ C(V_t,H_{J_t})^2, \qquad J_0=J\\
  \label{eq-satcor4}
  && 
  d F_t(x) = F_t(x) \langle C(V_t,H_{J_t}) (x-a_t), d W_t \rangle, \qquad F_0(x) = 1, \forall x \in \{\pm 1\}^n,
\end{eqnarray}
see \eqref{eq-satcor1} and \eqref{eq-satcor2} for the definitions of 
$a_t$, $V_t$.
Note that the system in \eqref{eq-satcor3}-\eqref{eq-satcor4} is a stochastic
differential equation of dimension $2^n+n(n+1)/2$.
\corON{The existence and uniqueness of solutions to this system follow from
the next lemma}, \RE{whose proof is also postponed to Subsection \ref{sec:technical} below.} \corFK{In the proof of existence, we also show that $J_t$ remains positive semidefinite, which essentially follows from the fact $\mathrm{Im}(C(V_t,H_{J_t})) \subseteq H_{J_t}$.}
\begin{lemma}\label{lem:existence}
  For any positive semidefinite matrix $J$, the system of stochastic differential equations \eqref{eq-satcor3}-\eqref{eq-satcor4} \corON{admits a unique strong solution.} \corFK{Furthermore, the matrix $J_t$ is positive semidefinite for all times $t \ge 0$, almost surely.}
\end{lemma}

\corON{We continue with the proof of Theorem \ref{thm:poincare}.} 
\corFK{Define now $H_t = H_{J_t}$ and $C_t=C(V_t,H_t)$.}
\corFK{We start by verifying that the measure $\nu_t$ with density $d\nu_t(x)= F_t(x) d\nu_0(x) $ is indeed a probability measure on $\{\pm 1\}^n$. The nonnegativity of $F_t(x)$ is easily verified (it follows from \eqref{eq-bohen1} below) so it remains to check that the total mass of $\nu_t$ is $1$.
 Note that, due to \eqref{eq-satcor1} and \eqref{eq-satcor4}, we have for the total mass
\begin{equation}\label{eqn:normalizing-constant}
z_t := \sum_{x\in \{\pm 1\}^n} F_t(x) \nu_0(x)
\end{equation}
that
\[ dz_t = \sum_{x\in \{\pm 1\}^n} F_t(x) \langle C_t (x - a_t), dW_t \rangle \nu_0(x) = \langle C_t (a_t - z_t a_t), dW_t \rangle\]
and because $z_0 = 1$, by uniqueness of the solution we have $z_t = 1$ for all time, and hence $\nu_t$ is a probability measure as claimed.}


Define now
$$
M_t = \int \varphi(x) F_t(x)\ d \nu.
$$
We have from \eqref{eq-satcor4} that
\begin{equation}\label{eq:dM_t}
d M_t = \left \langle \int \varphi(x) C_t (x-a_t)\ d \nu_t, d W_t \right \rangle = \left \langle C_t V_t, d W_t \right \rangle,
\end{equation}
where $V_t := \int \varphi(x) (x-a_t) d \nu_t$. 
Note that 
\begin{equation}
  \label{eq-bound2}
  |d[M]_t/dt|=|C_t V_t|^2\leq \delta^2, \textrm{almost surely},
\end{equation}
see \eqref{eq-bound1}. 

Define the stopping time
$
T = \min \{t: \mathrm{rank}(J_t) \leq 1 \}
$
and let
$
Y_t := \mathrm{Var}_{\nu_t}[\varphi].
$
Then
$$
d Y_t = d \left (\int \varphi^2\ d\nu_t - M_t^2 \right ) = - d[M]_t + \mbox{martingale}.
$$
Consequently, 
we get from \eqref{eq-bound2} that
\begin{equation}\label{eq:consassump}
|\EE Y_T - Y_0| =|\EE Y_T- \mathrm{Var}_{\nu_0}[\varphi]|\leq \delta^2 \EE T.
\end{equation}
Next, Ito's formula gives
$
d \log F_t(x) = \langle C_t (x-a_t), d W_t \rangle - \corFK{(1/2)}|C_t (x-a_t)|^2 dt
$
so by integrating, we have
\begin{equation}
  \label{eq-bohen1}
\log F_t(x) = c_t + \langle q_t, x \rangle - \langle B_t x, x \rangle \corFK{/2}
\end{equation}
with $c_t, q_t$ being some Ito processes and with
\begin{equation}
  \label{eq-bohen2}
B_t = \int_0^t C_s^2\ ds
\end{equation}
(here we use that the matrix $C_t$ is symmetric). 
Note that, with $J_t$ as in \eqref{eq:form}, we obtain that
\begin{equation}
\label{eq-bohen3}
J_t = J - B_t,
\end{equation}
where $J$ is the original interaction matrix.

We next claim that
almost surely, $T \leq \frac{1}{2} \mathrm{Tr}(J_0)=:T_0$. Indeed, 
$\frac{d}{dt} \mathrm{Tr}(J_t) \leq  - 2 \mathrm{dim}(H_t) + 2
\leq -2$ for all $t < T$, which means that if 
$T>T_0 $ then $\mathrm{Tr}(J_t)<0$, contradicting the positive
\corFK{semidefiniteness}
of $J_t$. We also deduce by monotonicity that
$$
0 \preceq J_T \preceq \|J\|_{OP} \mathrm{Id}.
$$
Thus, \eqref{eq:form} implies that
\begin{equation}\label{eq:formT}
  d \nu_T(x) = e^{c_T + \frac{1}{2} \langle U, x \rangle^2 + \langle q_T + h, x \rangle } d \mu(x)
\end{equation}
where $|U|^2 \leq \|J\|_{OP}$. 

To deduce the final result we use two more facts, proved below: Lemma~\ref{lem:poincare-rank-one}, which says the Poincar\'e inequality holds for the rank one model $\nu_T$, and Lemma~\ref{lem:supermartingale}, which says the Dirichlet form is a supermartingale under the dynamics \eqref{eq-satcor3}-\eqref{eq-satcor4}.
Given these facts, it follows from \eqref{eq:consassump} that
\[ (1-\|J\|_{OP})\Var_{\nu_0}(\varphi) \le (1 - \|J\|_{OP})\EE \Var_{\nu_T}(\varphi) + \delta^2 T_0 \le \EE \cE_{\nu_T}(\varphi,\varphi) + \delta^2 T_0 \le \cE_{\nu_0}(\varphi,\varphi) + \delta^2 T_0. \]
Taking $\delta \to 0$ proves the result. 
\end{proof}
\subsubsection{Proof of the existence of the process} \label{sec:technical}
\RE{In this section we prove the technical lemmas \ref{lem:cprops} and \ref{lem:existence}. }

\begin{proof}[Proof of Lemma \ref{lem:cprops}]
Introduce a smooth function $\phi: \RR_{+} \to [0,1]$ satisfying 
\begin{equation}
\label{eq-satcor0}
\phi(0)=1,\qquad \phi'(0) = 0, \qquad \sup_{z\in \RR_+}   z \phi(z)\leq \delta.
\end{equation}
For example, the function
\[\phi(z)= e^{- z^2/2\delta^2}\]
will do. Given a  
vector $v\in \RR^n$ and a linear subspace $H$ of $\RR^n$, write 
$v=v_1+v_2$ where $v_1\in H$ and $v_2\in H^\perp$, write $\hat v_1=v_1/|v_1|$,
and set
\begin{equation}\label{eq-cdef}
C(v,H)=\mathrm{Proj}_{H\cap v^\perp}+ \phi(|v_1|)\hat v_1\otimes \hat v_1.    
\end{equation} 
When $v_1 = 0$, $C(v,H)$ is just $\mathrm{Proj}_{H}$, \corFK{the orthogonal projection onto subspace $H$.}
The function $C(v,H)$ is a smooth approximation to the function 
$A(v,H)=\mathrm{Proj}_{H\cap v^\perp}$; the latter is not smooth owing to
a discontinuity when $|v_1|$ is small. Indeed, when $v=v_2 \in H^{\perp}$,
we note that $A(v,H)$  is the projection onto $H$, while if $v=\epsilon \hat v_1+ v_2$ for $\epsilon >0$, 
then the operator $A(v,H)$ is a projection onto a codimension 1 subspace of $H$.
On the other hand, $C(v,H)$ smoothes this transition, at the cost that
it is not a projection.	

\corFK{From the definition of $C(v,H)$ we have
\begin{equation*}
	|C(v,H) v|= \phi(|v_1|)|\langle \hat v_1,v\rangle| \leq |v_1|\phi(|v_1|)\leq \delta.
\end{equation*}
which shows the last claim \eqref{eq-bound1} in the lemma.}
	
We now justify the second claim of the lemma; the \corFK{remaining claims}follow directly from the definition. 
\corON{Rewrite}
$C(v,H) = \mathrm{Proj}_H + (\phi(|v_1|) - 1) \hat{v}_1 \otimes \hat{v}_1$ and 
\corON{observe that} the first term is constant and the second term is Lipschitz in $v$: this is clear away from zero, and in a neighborhood of zero it follows by rewriting the second term as $\frac{\phi(|v_1|) - 1}{|v_1|^2} v_1 \otimes v_1$ and using that $\phi(0) = 1, \phi'(0) = 1$, and $\phi$ is smooth. 
\end{proof}

\begin{proof}[Proof of Lemma \ref{lem:existence}]
	Informally, both existence and uniqueness follow from the fact that $H_{J_t}$ will be piecewise constant. 
	First we note that for a fixed subspace $H$, the equations
	\begin{eqnarray}
	\label{eq-fixedh1}
	&&\frac{dJ_t}{dt}= - C(V_t,H)^2 \\
	&& 
	\label{eq-fixedh2}
	d F_t(x) = F_t(x) \langle C(V_t,H) (x-a_t), d W_t \rangle \qquad \forall x \in \{\pm 1\}^n,
	\end{eqnarray}
	have Lipschitz coefficients (recall that a product of bounded Lipschitz functions is Lipschitz). \corON{Therefore, for any initial condition,
		a strong solution exists and is unique
		\cite{karatzas1998brownian}.} Consider \eqref{eq-fixedh1}-\eqref{eq-fixedh2} with $H := H_{J} = \mathrm{Im}(J)$ 
		and initial conditions $J_0 = J, F_0(x) = 1$ and define \corON{the}
	stopping time $\tau_1 = \inf \{t \geq 0 : \mathrm{dim}(H_{J_t}) \le \mathrm{dim}(H_{J_0}) - 1 \}$. We use this system of equations to define $J_t, F_t$ on the interval $[0,\tau_1]$ and observe \corON{that}
	this solution satisfies \eqref{eq-satcor3}-\eqref{eq-satcor4}, 
	\corFK{provided we show that $H_{J_t} = H$
	for $t < \tau_1$, which we do next. First, observe that for $v \in \ker J$ that 
	\[ J_t v = \left(J + \int_0^t -C(V_s,H)^2 ds\right) v = 0 \]
	where the first equality is by \eqref{eq-fixedh1}, and the second equality uses that $J v = 0$ and $C(V_s,H) v = (v^T C(V_s,H))^T = 0$ using that  $C(V_s,H)$ is symmetric, $\mathrm{Im}(C(V_s,H)) \subseteq H$, and $v$ is in $\ker J$ which is the orthogonal complement of $H$. Thus, $\ker J \subseteq \ker J_t$ for all $t \le \tau_1$. Next, by the definition of $\tau_1$, we have for all $t < \tau_1$ that $\dim(H_{J_t}) = \dim(H_{J_0})$; by a dimension count, this implies that $\ker J = \ker J_t$, that $H_{J_t}$ has no negative eigenvalues, and finally that $H_{J_t} = H_{J_0}$ since they are both equal to the orthogonal complement of $\ker J$.}
	
	 More generally, for all $i \le \mathrm{rank}(J_0)$ we define \corON{the stopping times}
	\[ \tau_i = \{t \ge \tau_{i - 1} : \mathrm{dim}(H_{J_t}) \le \mathrm{dim}(H_{J_{0}}) - i \} \]
	and define $J_t,F_t$ on $t \in [\tau_{i - 1}, \tau_i]$ by the solution to \eqref{eq-fixedh1}-\eqref{eq-fixedh2} with initial condition $J_{\tau_{i - 1}}, F_{\tau_{i - 1}}$ at time $\tau_{i - 1}$ \corFK{ and $H = H_{J_{\tau_{i - 1}}}.$} \corON{Finally,
		define the solution for  $t\geq \tau_{\mathrm{rank}(J_0)}$} similarly,
	with $H = \emptyset$ \corFK{(i.e. the solution is constant).} This shows existence \corFK{of the solution and positive semidefiniteness of $J_t$} and essentially the same argument proves uniqueness as well.
\end{proof}

\subsection{Rank one inequality}
In this section we prove the needed Poincar\'e inequality for rank one models (Lemma~\ref{lem:poincare-rank-one}).
We use the result of \cite{wu2006poincare}, which establishes a Poincar\'e inequality under a condition on the influence matrix referred to as the $\ell_2$-Dobrushin uniqueness regime (also studied in \cite{hayes2006simple,marton2019logarithmic}). 
\begin{definition}
For two probability measures $\PP$ and $\QQ$ defined over the same measure space, their \emph{Total Variation (TV) Distance} is defined to be
\[ \|\PP - \QQ\|_{TV} := \sup_A |\PP(A) - \QQ(A)| \]
where $A$ ranges over all measurable events.
\end{definition}
\begin{definition}\label{defn:influence-matrix}
Suppose that $X$ is a random vector supported on a finite set $\mathcal{X}^n$ and distributed according to $\nu$. Define the \emph{influence matrix} $A$ to be the matrix with $\text{diag}(A) = 0$ and
\[ A_{ij} := \max_{x_{\sim i}, x'_{\sim i}} \|\PP_{\nu}[X_i \corFK{ = \cdot} | X_{\sim i} = x_{\sim i}] - \PP_{\nu}[X_i \corFK{= \cdot} | X_{\sim i} = x'_{\sim i}]\|_{TV} \]
where $x_{\sim i}$ and $x'_{\sim i}$ in $\mathcal{X}^{n - 1}$ are allowed to differ only in coordinate $j$, \corFK{and $\PP_{\nu}[X_i \corFK{ = \cdot} | X_{\sim i} = x_{\sim i}]$ denotes the conditional law of $X_i$ under $\nu$ given $X_{\sim i} = x_{\sim i}$.} We say that (the law of) $X$ satisfies the $\ell_2$-\emph{Dobrushin uniqueness condition} if $\|A\|_{OP} < 1$.
\end{definition}
\noindent
Note that in contrast to the interaction matrix $J$, the influence matrix $A$ has nonnegative entries.
We specialize the following Theorem to the setting of spins valued in $\{\pm 1\}$, though it holds in more general settings.
\begin{theorem}[Theorem 2.1 of \cite{wu2006poincare}]\label{thm:dobrushin-poincare}
Suppose that $X \sim \nu$ is a random vector valued in the hypercube $\{\pm 1\}^n$ and let $A$ be the corresponding influence matrix (as in Definition~\ref{defn:influence-matrix}). For any test function $\varphi : \{\pm 1\}^n \to \mathbb{R}$,
\[ (1 - \|A\|_{OP}) \Var(\varphi) \le \cE_{\nu}(\varphi,\varphi) \]
where $\cE_{\nu}(\varphi,\varphi)$ is the Dirichlet form associated to the Glauber dynamics under $\nu$.
\end{theorem}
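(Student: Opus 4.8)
The plan is to deduce the Poincar\'e inequality from an \emph{approximate tensorization of variance}, obtained through a martingale (telescoping) decomposition along a fixed ordering of the coordinates, in which the cross-terms are controlled by the influence matrix $A$. Write $D_i\varphi := \varphi(X) - \EE_\nu[\varphi(X)\mid X_{\sim i}]$, so that $\cE_\nu(\varphi,\varphi) = \sum_{i=1}^n \EE_\nu[(D_i\varphi)^2] = \sum_{i=1}^n \EE_\nu[\Var_\nu(\varphi\mid X_{\sim i})]$; it therefore suffices to prove the approximate tensorization bound
\[ (1-\|A\|_{OP})\,\Var_\nu(\varphi) \;\le\; \sum_{i=1}^n \EE_\nu\big[\Var_\nu(\varphi\mid X_{\sim i})\big], \]
and we may assume $\EE_\nu\varphi = 0$.

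First I would fix the ordering $1,\dots,n$, set $\cF_k := \sigma(X_1,\dots,X_k)$ and $X_{<k} := (X_1,\dots,X_{k-1})$, and introduce the orthogonal, telescoping martingale increments $\Delta_k\varphi := \EE_\nu[\varphi\mid \cF_k] - \EE_\nu[\varphi\mid \cF_{k-1}]$, so that $\Var_\nu(\varphi) = \sum_{k=1}^n \EE_\nu[(\Delta_k\varphi)^2]$. A short computation with the tower property (using $\cF_{k-1}\subseteq \sigma(X_{\sim k})$) gives the identity
\[ \Delta_k\varphi \;=\; \EE_\nu[\,D_k\varphi \mid \cF_k\,] \;+\; E_k, \qquad E_k := \EE_\nu[\,P_k\varphi \mid \cF_k\,] - \EE_\nu[\,P_k\varphi \mid \cF_{k-1}\,], \]
where $P_k\varphi := \EE_\nu[\varphi\mid X_{\sim k}]$. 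Since the first term is a conditional expectation of $D_k\varphi$, it has $L^2(\nu)$-norm at most $\|D_k\varphi\|_{L^2(\nu)}$, so the contribution of these terms to $\sum_k\EE_\nu[(\Delta_k\varphi)^2]$ is at most $\cE_\nu(\varphi,\varphi)$.

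The crux is to control the error terms $E_k$. The key observation is that $E_k$ is the $k$-th martingale increment of $P_k\varphi$, a function that does \emph{not} depend on $x_k$; hence $E_k$ is produced entirely by the dependence of the conditional law of $(X_{k+1},\dots,X_n)$ on $X_k$ (given $X_{<k}$). I would couple the two such conditional laws corresponding to $X_k=+1$ and $X_k=-1$ and invoke the Dobrushin comparison mechanism: when $\|A\|_{OP}<1$ the influence of a single flip at site $k$ reaches a site $j$ with total weight at most $\big((I-A)^{-1}\big)_{kj} = \sum_{m\ge 0}(A^m)_{kj}$ (finite because $A$ has nonnegative entries). This bounds each $\|E_k\|_{L^2(\nu)}$ by a weighted combination of the heat-bath quantities $\|D_j\varphi\|_{L^2(\nu)}$ through a matrix whose spectral action is governed by $\|A\|_{OP}$; combined with $\|\Delta_k\varphi\|_{L^2(\nu)}\le \|D_k\varphi\|_{L^2(\nu)}+\|E_k\|_{L^2(\nu)}$, squaring and summing (Minkowski / Cauchy--Schwarz) yields the approximate tensorization bound above, and hence the theorem.

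The main obstacle is exactly this last step: making the Dobrushin comparison quantitative in $L^2$ --- i.e.\ showing that the error vector $(E_k)_k$ is dominated, in the relevant quadratic form, by $\|A\|_{OP}$ times the gradient vector $(D_k\varphi)_k$ --- and, more delicately, organizing the estimate so as to obtain the \emph{sharp} constant $1-\|A\|_{OP}$ rather than the $(1-\|A\|_{OP})^2$ that a crude triangle-inequality splitting would give; this is typically achieved by retaining a bilinear rather than purely quadratic bound, or by closing a self-improving inequality for $\Var_\nu(\varphi)$. Alternatively, one may take the Dobrushin--Shlosman/F\"ollmer comparison theorem as a black box to supply the needed $L^2$ influence estimate directly.
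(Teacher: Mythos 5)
First, note that the paper does not prove this statement at all: it is imported verbatim as Theorem 2.1 of Wu \cite{wu2006poincare} and used as a black box (via Lemma~\ref{lem:influence-computation}) to handle the rank-one measures. So the relevant question is whether your sketch would reconstruct Wu's theorem, and as written it does not: the skeleton is fine (the identity $\Delta_k\varphi=\EE_\nu[D_k\varphi\mid\cF_k]+E_k$ is correct by the tower property, and $\Var_\nu(\varphi)=\sum_k\EE_\nu[(\Delta_k\varphi)^2]$ with the first terms contributing at most $\cE_\nu(\varphi,\varphi)$), but the step you yourself flag as the crux is exactly the content of the theorem, and it is left unproved.

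Concretely, two things are missing and neither is routine. (i) The claimed bound of $\|E_k\|_{L^2(\nu)}$ by a weighted combination of the $\|D_j\varphi\|_{L^2(\nu)}$ ``through a matrix whose spectral action is governed by $\|A\|_{OP}$'' does not follow from the standard Dobrushin comparison/coupling mechanism: that mechanism controls oscillations, i.e.\ it yields pointwise bounds of the form $|E_k|\le\sum_j C_{kj}\,\delta_j(\varphi)$ with $\delta_j$ the coordinate-wise sup oscillation and $C\le(I-A)^{-1}A$ entrywise, which pairs naturally with $\|A\|_{\infty\to\infty}$ (the classical $\ell_\infty$ Dobrushin condition) and is too lossy to give a statement in terms of $\|A\|_{OP}$ and $L^2$ norms. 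Upgrading this to a genuine $L^2$/spectral-norm influence estimate is precisely the nontrivial contribution of Wu (and of the related arguments of Hayes and Marton), so invoking ``Dobrushin--Shlosman/F\"ollmer comparison as a black box'' is circular here. (ii) Even granting such an $L^2$ bound, the assembly you describe ($\|\Delta_k\|\le\|D_k\varphi\|+\|E_k\|$, then square and sum) produces the constant $(1-\|A\|_{OP})^{-2}$, not $(1-\|A\|_{OP})^{-1}$; you acknowledge that recovering the sharp constant requires ``a bilinear bound'' or ``closing a self-improving inequality,'' but no such argument is given. Since both the quantitative $\ell_2$ influence estimate and the sharp-constant bookkeeping are asserted rather than executed, the proposal is an outline of a plausible strategy rather than a proof; to complete it you would essentially have to reproduce Wu's argument (or the variance-decomposition proofs of the $\ell_2$-Dobrushin Poincar\'e inequality in the subsequent literature), which proceeds by working directly with conditional variances/covariances rather than with coupled oscillation bounds.
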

To use this Theorem, we need to upper bound the spectral norm of the influence matrix for rank one models, which we do in the following Lemma.
\begin{lemma}\label{lem:influence-computation}
Suppose that 
\begin{equation}\label{eqn:rank-one-model}
d\nu(x) = \exp\left(\frac{1}{2} \langle x, u \rangle^2 + \langle h, x \rangle - c \right) d\mu(x)
\end{equation}
where $\mu$ is the uniform measure on $\{\pm 1\}^n$. The influence matrix $A$ of $\nu$ (from Definition~\ref{defn:influence-matrix}) satisfies $\|A\|_{OP} \le |u|^2$. 
\end{lemma}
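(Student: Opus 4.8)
The plan is to compute the one-site conditional laws of $\nu$ explicitly, extract a pointwise Lipschitz estimate for how such a conditional depends on the other spins, and then convert the resulting entrywise bound on $A$ into an operator-norm bound by exploiting that $A$ is entrywise nonnegative.

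First I would fix a coordinate $i$ and, conditionally on $X_{\sim i} = x_{\sim i}$, set $w := \sum_{k \neq i} u_k x_k = \langle u, x\rangle - u_i x_i$. Using $x_i^2 = 1$ to linearize the rank-one quadratic form, $\tfrac12\langle x, u\rangle^2 = \tfrac12(u_i x_i + w)^2 = \tfrac12(u_i^2 + w^2) + (u_i w)\,x_i$, so after absorbing everything independent of $x_i$ into the normalization,
\[ \PP_\nu[X_i = s \mid X_{\sim i} = x_{\sim i}] = \frac{e^{s(u_i w + h_i)}}{2\cosh(u_i w + h_i)}, \qquad s \in \{\pm 1\}. \]
Thus $X_i \mid X_{\sim i}$ is a two-point law whose mass at $+1$ equals $g(u_i w + h_i)$, where $g(t) := \tfrac12(1 + \tanh t)$, and — as is immediate for two-point laws — the total variation distance between two such conditionals is the absolute difference of these $+1$-probabilities.

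Next I would observe that flipping the $j$-th spin ($j \neq i$) in $x_{\sim i}$ changes $w$ by exactly $\pm 2u_j$, hence changes the effective field $u_i w + h_i$ by exactly $\pm 2 u_i u_j$. Since $g'(t) = \tfrac12\,\mathrm{sech}^2 t \in [0, \tfrac12]$, the map $g$ is $\tfrac12$-Lipschitz, so for any $x_{\sim i}$ and $x'_{\sim i}$ differing only in coordinate $j$,
\[ \big\|\PP_\nu[X_i = \cdot \mid X_{\sim i} = x_{\sim i}] - \PP_\nu[X_i = \cdot \mid X_{\sim i} = x'_{\sim i}]\big\|_{TV} \le \tfrac12 \cdot 2 |u_i u_j| = |u_i|\,|u_j|. \]
Taking the maximum over such pairs gives $A_{ij} \le |u_i|\,|u_j|$ for $i \neq j$; together with $A_{ii} = 0 \le u_i^2$ this says $0 \le A_{ij} \le (\hat u\hat u^{T})_{ij}$ entrywise, where $\hat u$ denotes the vector with entries $|u_k|$.

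Finally I would pass to the operator norm. The matrix $A$ need not be positive semidefinite, but it is entrywise nonnegative, so for any $v \in \RR^n$ with $|v| = 1$,
\[ |\langle v, A v\rangle| \le \sum_{i,j} |v_i|\, A_{ij}\, |v_j| \le \sum_{i,j} |v_i|\,|u_i|\,|u_j|\,|v_j| = \langle \hat u, \hat v\rangle^2 \le |u|^2, \]
by Cauchy--Schwarz, where $\hat v$ has entries $|v_k|$; since $A$ is symmetric, $\|A\|_{OP} = \sup_{|v|=1}|\langle v, Av\rangle| \le |u|^2$. (Equivalently, one may invoke Perron--Frobenius to compare the spectral radii of the nonnegative matrices $A$ and $\hat u \hat u^{T}$.) I do not anticipate a genuine obstacle here: the parts requiring the most care are the use of $x_i^2 = 1$ to linearize the quadratic in the conditional law, and the fact that entrywise domination yields an operator-norm bound only because $A$ is nonnegative — so the argument must route through $|v|$ (or Perron--Frobenius) rather than a naive monotonicity of $\|\cdot\|_{OP}$.
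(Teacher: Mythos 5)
Your argument is correct and is essentially the paper's proof: compute the single-site conditional law (equivalently $\EE[X_i\mid X_{\sim i}]=\tanh(u_i\langle u, X_{\sim i}\rangle+h_i)$), use the Lipschitz property of $\tanh$ and the fact that flipping $x_j$ shifts the effective field by $2|u_iu_j|$ to get $A_{ij}\le |u_i||u_j|$, and then pass from this entrywise bound on the nonnegative matrix $A$ to the operator-norm bound against $\hat u\hat u^{T}$ (the paper phrases this last step via Perron--Frobenius). One small caveat: you assert that $A$ is symmetric in order to write $\|A\|_{OP}=\sup_{|v|=1}|\langle v,Av\rangle|$, but the influence matrix of Definition~\ref{defn:influence-matrix} is a maximum over attainable configurations and need not be symmetric (e.g.\ a very large $h_i$ with $h_j=0$ can make $A_{ij}$ much smaller than $A_{ji}$). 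This is harmless because your estimate applies verbatim to the bilinear form with two unit vectors: $\|A\|_{OP}=\sup_{|v|=|w|=1}\langle w,Av\rangle\le \sum_{i,j}|w_i|\,|u_i|\,|u_j|\,|v_j|\le \langle \hat u,\hat w\rangle\langle\hat u,\hat v\rangle\le |u|^2$, which is exactly the entrywise-domination principle you already identified as the crux.
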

\begin{proof}
First observe that
\[ \EE[X_i \mid X_{\sim i}] = \tanh(u_i \langle X_{\sim i}, u \rangle + h_i). \]
Therefore, from the definition of $A_{ij}$ \RE{and since $\tanh(\cdot)$ is $1$-Lipschitz}, we have
\begin{equation}\label{eqn:influence-bound}
A_{ij} = \frac{1}{2} \max_{x_{\sim i}, x'_{\sim i}} |\EE_{\nu}[X_i | X_{\sim i} = x_{\sim i}] - \EE_{\nu}[X_i | X_{\sim i} = x'_{\sim i}]| \le |u_i u_j| 
\end{equation}
where $x_{\sim i}, x'_{\sim i}$ range over vectors in $\{\pm 1\}^n$ differing only in coordinate $j$. Define $v$ to be the element-wise absolute value of $u$, i.e. $v_i = |u_i|$.
Since $A$ is a matrix with nonnegative entries, it follows from the Perron-Frobenius Theorem and \eqref{eqn:influence-bound} that $\|A\|_{OP} \le \| v v^T \|_{OP} = |v|^2 = |u|^2$.
\end{proof}
\noindent
Combining Lemma~\ref{lem:influence-computation} and Theorem~\ref{thm:dobrushin-poincare} yields the desired Poincar\'e inequality for rank one models.
\begin{lemma}\label{lem:poincare-rank-one}
Suppose that $\nu,u$ are as in \eqref{eqn:rank-one-model}.
Then for any test function $\varphi : \{\pm 1\}^n \to \mathbb R$,
\[ \corON{(1 - |u|^2)} \Var(\varphi) \le \cE_{\nu}(\varphi,\varphi) \]
where $\cE_{\nu}$ is the Dirichlet form associated to the Glauber dynamics under $\nu$. 
\end{lemma}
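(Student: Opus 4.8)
The plan is to derive Lemma~\ref{lem:poincare-rank-one} as an immediate consequence of the two preceding results. Lemma~\ref{lem:influence-computation} gives us, for a rank-one model $d\nu(x) = \exp\left(\tfrac12 \langle x, u\rangle^2 + \langle h, x\rangle - c\right) d\mu(x)$, the bound $\|A\|_{OP} \le |u|^2$ on the operator norm of the influence matrix of $\nu$. Theorem~\ref{thm:dobrushin-poincare} (Theorem 2.1 of \cite{wu2006poincare}) then gives the Poincar\'e inequality $(1 - \|A\|_{OP}) \Var(\varphi) \le \cE_\nu(\varphi,\varphi)$ for any test function $\varphi$.

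So the proof is essentially a one-line chaining argument. First I would invoke Lemma~\ref{lem:influence-computation} to get $\|A\|_{OP} \le |u|^2$. Then I would note that this gives $1 - |u|^2 \le 1 - \|A\|_{OP}$, so that
\[ (1 - |u|^2)\Var(\varphi) \le (1 - \|A\|_{OP})\Var(\varphi) \le \cE_\nu(\varphi,\varphi), \]
where the second inequality is Theorem~\ref{thm:dobrushin-poincare}. The only subtlety worth a remark is that the inequality $(1-|u|^2)\Var(\varphi) \le (1 - \|A\|_{OP})\Var(\varphi)$ uses $\Var(\varphi) \ge 0$, which is automatic; and if $|u|^2 \ge 1$ the statement is vacuous (the left-hand side is nonpositive while the Dirichlet form is nonnegative), so there is nothing to prove in that regime and one may assume $|u|^2 < 1$, in which case $\|A\|_{OP} < 1$ and Theorem~\ref{thm:dobrushin-poincare} applies in the stated $\ell_2$-Dobrushin uniqueness regime.

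There is no real obstacle here: all the work has already been done in establishing the formula $\EE[X_i \mid X_{\sim i}] = \tanh(u_i \langle X_{\sim i}, u\rangle + h_i)$, the $1$-Lipschitz bound on $\tanh$, and the Perron--Frobenius argument comparing $A$ entrywise to $vv^T$ with $v_i = |u_i|$. The present lemma is purely a matter of combining Lemma~\ref{lem:influence-computation} with Theorem~\ref{thm:dobrushin-poincare}, and this is exactly how it will be used back in the proof of Theorem~\ref{thm:poincare}, applied to the rank-one measure $\nu_T$ from \eqref{eq:formT} with $U$ in place of $u$ and $|U|^2 \le \|J\|_{OP}$, yielding $(1 - \|J\|_{OP})\Var_{\nu_T}(\varphi) \le (1 - |U|^2)\Var_{\nu_T}(\varphi) \le \cE_{\nu_T}(\varphi,\varphi)$.
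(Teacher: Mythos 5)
Your proposal is correct and matches the paper exactly: the paper also obtains Lemma~\ref{lem:poincare-rank-one} by combining the influence-matrix bound $\|A\|_{OP} \le |u|^2$ of Lemma~\ref{lem:influence-computation} with Theorem~\ref{thm:dobrushin-poincare}. Your remark about the case $|u|^2 \ge 1$ being vacuous is a sensible (if unstated in the paper) clarification, but nothing more is needed.
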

\subsection{\corON{The} Dirichlet form is a supermartingale}
\begin{lemma}\label{lem:supermartingale}
Let $W_t$ be a Brownian motion adapted to a filtration $\FF_t$. Let $C_t$ be a matrix-valued process adapted to $\FF_t$.
Let $\nu_0$ be an arbitrary measure on $\{\pm 1\}^n$ and suppose that $F_t$ is a solution to the SDE
\[ d F_t(x) = F_t(x) \langle C_t (x-a_t), d W_t \rangle, \qquad F_0(x) = 1, \forall x \in \{\pm 1\}^n \]
where $d\nu_t(x) = F_t(x) d\nu_0(x)$ and $a_t = \int x d\nu_t(x)$.
Let $\varphi : \{\pm 1\}^n \to \mathbb{R}$ be an arbitrary test function. Then the Dirichlet form $\cE_{\nu_t}(\varphi,\varphi)$ is a supermartingale. 
\end{lemma}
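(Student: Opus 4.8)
The plan is to compute the Itô differential of $\cE_{\nu_t}(\varphi,\varphi)$ directly and show that its drift term is nonpositive. The key structural point is that the Dirichlet form is a sum over coordinates $i$ of terms of the form $\EE_{\nu_t}[(\varphi(X) - \EE_{\nu_t}[\varphi(X) \mid X_{\sim i}])^2]$, and for each fixed $i$ this is a variance of $\varphi$ relative to the conditional measure $\nu_t(\cdot \mid X_{\sim i})$, averaged over $X_{\sim i}$. So it suffices to understand how each conditional variance evolves. More precisely, I would write
\[ \cE_{\nu_t}(\varphi,\varphi) = \sum_{i=1}^n \EE_{\nu_t}\big[\varphi(X)^2\big] - \EE_{\nu_t}\big[\EE_{\nu_t}[\varphi(X)\mid X_{\sim i}]^2\big] = \sum_{i=1}^n \int \Var_{\nu_t}(\varphi \mid x_{\sim i})\, d\nu_t(x_{\sim i}), \]
and compute $d$ of each summand.

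The main computational step is to find, for a bounded test function $g : \{\pm 1\}^n \to \RR$, the differential $d\,\EE_{\nu_t}[g(X)]$. From the SDE $dF_t(x) = F_t(x)\langle C_t(x-a_t), dW_t\rangle$ one gets $d\,\EE_{\nu_t}[g] = \langle C_t \int g(x)(x-a_t)\, d\nu_t, dW_t\rangle$, a pure martingale (no drift), because $F_t$ itself has no drift. The drift in the evolution of the Dirichlet form therefore comes entirely from the Itô correction (quadratic variation) terms when we expand products like $\EE_{\nu_t}[\varphi^2]$, $\EE_{\nu_t}[\EE_{\nu_t}[\varphi\mid X_{\sim i}]^2]$, etc. The cleanest route: fix $i$, condition on $X_{\sim i}$, and observe that $t \mapsto \nu_t(\cdot \mid x_{\sim i})$ is itself (up to the random normalization, and noting that coordinate $i$ is a single $\pm 1$ spin) a tilting process driven by the same Brownian motion; the conditional variance of a function of a single spin is a concave-type quantity whose Itô drift turns out to be $-$(something nonnegative). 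Concretely, for a single spin, $\Var(\varphi \mid x_{\sim i})$ can be written as $p_t(1-p_t)(\varphi(x_{\sim i},1)-\varphi(x_{\sim i},-1))^2$ where $p_t$ is the conditional probability of $X_i = 1$; since $p(1-p)$ is concave, and $p_t$ evolves as a martingale under $\nu_t(\cdot\mid x_{\sim i})$ — here one must be slightly careful because the mixing weights $\nu_t(x_{\sim i})$ also fluctuate — Itô's formula gives a nonpositive drift. Summing over $i$ and over the mixture then yields that $\cE_{\nu_t}(\varphi,\varphi)$ is a supermartingale.

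I expect the main obstacle to be bookkeeping the cross terms correctly: $\cE_{\nu_t}(\varphi,\varphi)$ is a ratio/product of expectations under $\nu_t$ (the conditional expectation $\EE_{\nu_t}[\varphi\mid X_{\sim i}]$ is itself $\EE_{\nu_t}[\varphi \mathbf{1}_{X_{\sim i}}]/\nu_t(X_{\sim i})$), so applying Itô's formula requires differentiating quotients and squares of Itô processes and then carefully collecting the $d[\cdot]_t$ brackets. The conceptual shortcut that tames this is to decompose $\cE_{\nu_t}(\varphi,\varphi) = \sum_i \EE_{\mu_{\sim i}}[w_t(x_{\sim i})\, \Var_{\nu_t}(\varphi\mid x_{\sim i})]$ and to note that, after a change of measure to fix the conditioning event, each inner term becomes a function of only finitely many scalar Itô processes (the two conditional masses of $X_i = \pm 1$ given $x_{\sim i}$), reducing the whole computation to the concavity of $(a,b)\mapsto ab/(a+b)$ or equivalently $p(1-p)$ on the relevant domain. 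After that reduction the nonpositivity of the drift is immediate and the martingale part requires no further comment, so the proof concludes.

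\qed
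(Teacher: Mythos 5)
Your proposal is correct and, once you reach the reduction to the unnormalized pair of masses, it is essentially the paper's own argument: the paper likewise rewrites $\cE_{\nu_t}(\varphi,\varphi)=\sum_{x\sim y}\frac{\nu_t(x)\nu_t(y)}{\nu_t(x)+\nu_t(y)}(\varphi(x)-\varphi(y))^2$ and shows each conductance is a supermartingale, verifying the nonpositive drift by an explicit It\^o computation in log-coordinates together with convexity of $|\cdot|^2$, which amounts precisely to the concavity of $(a,b)\mapsto ab/(a+b)$ applied to the driftless pair $(\nu_t(x),\nu_t(y))$ that you invoke. One caution: your intermediate phrasing via $p_t(1-p_t)$ with ``$p_t$ a martingale'' is not right as stated --- the conditional probability $p_t$ has a drift, and the marginal weight $\nu_t(x_{\sim i})$ fluctuates and correlates with it --- but the version you settle on (concavity of the harmonic-mean-type function of the two unnormalized masses, which are genuinely driftless) is sound and is exactly the paper's per-edge argument.
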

\begin{proof}
We use that the Dirichlet form can be rewritten as
\begin{equation}\label{eqn:dirichlet-form-useful}
\cE_{\nu_t}(\varphi,\varphi) =  \sum_{x \sim y} \frac{ \nu_t(x) \nu_t(y) }{\nu_t(x) + \nu_t(y)} (\varphi(x) - \varphi(y))^2 
\end{equation}
where $x \sim y$ denotes the adjacency relation on the hypercube, i.e. $x$ and $y$ differ in exactly one coordinate. To see this, consider arbitrary $\nu$, 
let $X$ and $Y$ be two independent samples from $\nu$, and observe
\begin{align} 
\cE_{\nu}(\varphi,\varphi) 
&= \EE_{\nu} \sum_{i = 1}^n \Var(\varphi(X) \mid X_{\sim i}) \nonumber \\
&= \frac{1}{2} \EE_{\nu} \sum_{i = 1}^n \EE_{\nu}[(\varphi(Y) - \varphi(X))^2 \mid Y_{\sim i} = X_{\sim i}, X_{\sim i}] \nonumber\\
&= \frac{1}{2} \EE_{\nu} \sum_{i = 1}^n \frac{(\varphi(Y) - \varphi(X))^2 \cdot \mathbf{1}[Y_{\sim i} = X_{\sim i}]}{\PP_{\nu}(Y_{\sim i} = X_{\sim i} \mid X_{\sim i})} \nonumber\\
&= \sum_{x \sim y} \nu(x) \nu(y) (\varphi(x) - \varphi(y))^2 \sum_{i = 1}^n \frac{\mathbf{1}[y_{\sim i} = x_{\sim i}]}{\PP_{\nu}(Y_{\sim i} = x_{\sim i})}\nonumber \\
&= \sum_{x \sim y} \frac{\nu(x) \nu(y)}{\nu(x) + \nu(y)} (\varphi(x) - \varphi(y))^2
\label{eq-alternate}
\end{align}
where in the second equality we used the identity $\Var(X) = \frac{1}{2} \EE[(X - Y)^2]$ for $Y$ an independent copy of $X$.

Given this, it suffices to show that $\frac{ \nu_t(x) \nu_t(y) }{\nu_t(x) + \nu_t(y)}$ is a supermartingale for fixed $x \sim y$.
Let us calculate the Ito differential $d \frac{\nu_t(x) \nu_t(y)}{\nu_t(x) + \nu_y(y)}$. We have by Ito's Lemma,
$$
d \log \nu_t(x) = \langle d W_t, \tilde x \rangle - \frac{1}{2} |\tilde x|^2 dt.
$$
and
$$
d \log \nu_t(y) = \langle d W_t, \tilde y \rangle - \frac{1}{2} |\tilde y|^2 dt.
$$
where $\tilde x = C_t(x-a_t)$, $\tilde y = C_t(y-a_t)$. Moreover,
$$
d \log \left (\nu_t(x) + \nu_t(y) \right ) = \langle d W_t, \alpha \tilde x + \beta \tilde y \rangle - \frac{1}{2} | \alpha \tilde x + \beta \tilde y  |^2 dt
$$
where $\alpha = \frac{\nu_t(x)}{\nu_t(x) + \nu_t(y)}$, $\beta = \frac{\nu_t(y)}{\nu_t(x) + \nu_t(y)}$. Therefore,
\begin{align*}
&d \left (\frac{\nu_t(x) \nu_t(y)}{\nu_t(x) + \nu_y(y)} \right ) \\
&= \left (\frac{\nu_t(x) \nu_t(y)}{\nu_t(x) + \nu_y(y)} \right ) \frac{1}{2} \left (| \alpha \tilde x + \beta \tilde y  |^2 + | \beta \tilde x + \alpha \tilde y  |^2 - |\tilde x|^2 - |\tilde y|^2 \right ) dt + \mbox{ martingale}.
\end{align*}
So again by Ito's Lemma, since $d e^{S_t} = e^{S_t} d S_t + \frac{1}{2} e^{S_t} d[S]_t$, we have
$$
d \left (\frac{\nu_t(x) \nu_t(y)}{\nu_t(x) + \nu_y(y)} \right ) = \left (\frac{\nu_t(x) \nu_t(y)}{\nu_t(x) + \nu_y(y)} \right ) \frac{1}{2} \left (| \alpha \tilde x + \beta \tilde y  |^2 + | \beta \tilde x + \alpha \tilde y  |^2 - |\tilde x|^2 - |\tilde y|^2 \right ) dt + \mbox{ martingale}.
$$
By convexity of $|\cdot|^2$ and since $\alpha + \beta = 1$, the above expression is a supermartingale.
\end{proof}
\section{Consequences for mixing time}
\label{sec-4}
By standard arguments which we now recall, the Poincar\'e inequality implies mixing time estimates for the Glauber dynamics. For a Markov semigroup $P_t = e^{t\Lambda}$, reversible with respect to $\nu$, a Poincar\'e inequality
\[ \gamma \Var_{\nu}[\varphi] \le \cE_{\nu}(\varphi,\varphi) \]
is equivalent to a spectral gap estimate:
\[ \gamma \le \lambda_1 - \lambda_2 \]
where $\lambda_1,\lambda_2$ are the top two eigenvalues of the transition rate matrix $\Lambda$ (see e.g. \cite{van2014probability}). The quantity $1/\gamma$ is known as the \emph{relaxation time} of the Markov chain. As usual, we let $P_t(\cdot,\cdot)$ denote the transition kernel of the Markov chain. Linear algebraic arguments establish the following mixing time estimate:
\begin{theorem}[Theorem 20.6 of \cite{levin2017markov}]\label{thm:relaxation-to-mixing}
  \corON{Let $P_t$ be a reversible Markov semigroup  over the} finite state space $\Omega$, with stationary measure $\pi$ and spectral gap $\gamma$.
  \corON{Then,}
\[ \max_{x \in \Omega} \|P_t(x,\cdot) - \pi\|_{TV} \le \epsilon \]
as long as 
\[ t \ge \frac{1}{\gamma} \log \frac{1}{\epsilon \min_{x \in \Omega} \pi(x)}. \]
\end{theorem}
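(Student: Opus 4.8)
The plan is to reduce the total-variation bound to a chi-square ($L^2$) estimate and then exploit reversibility through the spectral theorem. First I would observe that, by detailed balance $\pi(y)P_t(y,x) = \pi(x)P_t(x,y)$, the density of $P_t(x,\cdot)$ with respect to $\pi$ equals $f_t := P_t g_0$, where $g_0(y) := \mathbf{1}[y=x]/\pi(x)$ and $P_t$ is viewed as acting on functions. By Cauchy--Schwarz (Jensen) on the probability space $(\Omega,\pi)$, $2\|P_t(x,\cdot)-\pi\|_{TV} = \|f_t - 1\|_{L^1(\pi)} \le \|f_t - 1\|_{L^2(\pi)}$.

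Next, since $P_t 1 = 1$ we may write $f_t - 1 = P_t(g_0 - 1)$, and $g_0 - 1$ is orthogonal to the constants in $L^2(\pi)$. Reversibility means precisely that $P_t = e^{t\Lambda}$ is self-adjoint on $L^2(\pi)$, so by the spectral theorem $\Lambda$ restricted to the orthogonal complement of the constants has spectrum contained in $(-\infty,\lambda_2] = (-\infty,-\gamma]$ (using $\lambda_1 = 0$ and the gap $\gamma = \lambda_1 - \lambda_2$); hence $P_t$ acts there as a contraction of operator norm at most $e^{-\gamma t}$. Therefore $\|f_t - 1\|_{L^2(\pi)} \le e^{-\gamma t}\|g_0 - 1\|_{L^2(\pi)}$, and a one-line computation gives $\|g_0 - 1\|_{L^2(\pi)}^2 = 1/\pi(x) - 1 \le 1/\min_{z}\pi(z)$.

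Combining the two displays yields $\|P_t(x,\cdot)-\pi\|_{TV} \le \tfrac12 e^{-\gamma t}/\sqrt{\min_z\pi(z)}$, uniformly over the starting state $x$. Plugging in $t \ge \frac1\gamma\log\frac{1}{\epsilon\min_z\pi(z)}$ gives $e^{-\gamma t}\le \epsilon\min_z\pi(z)$, so the right-hand side becomes $\tfrac12\epsilon\sqrt{\min_z\pi(z)}\le\epsilon$, which proves the claim (with a factor of $2$ and a square root to spare). The only genuinely delicate point is the spectral-theorem step: it uses reversibility essentially, since for a non-normal generator the $L^2\to L^2$ decay rate is not governed by the spectral gap alone. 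Everything else is routine manipulation of the chi-square divergence, and one could equivalently phrase the same argument via the eigenfunction expansion $f_t = \sum_j e^{\lambda_j t}\langle g_0,\psi_j\rangle_{L^2(\pi)}\,\psi_j$ in an $L^2(\pi)$-orthonormal eigenbasis $\{\psi_j\}$ of $P_t$.
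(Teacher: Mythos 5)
Your proof is correct, and since the paper simply cites this statement (Theorem 20.6 of Levin--Peres) without reproving it, the natural comparison is with the textbook argument, which is essentially the same: bound total variation by the $L^2(\pi)$ (chi-square) distance, use self-adjointness of $P_t$ on the orthogonal complement of the constants to get the contraction factor $e^{-\gamma t}$, and compute $\|g_0-1\|_{L^2(\pi)}^2 = 1/\pi(x)-1$. Your version even yields the stated mixing bound with a factor of $2$ and a square root of $\min_z \pi(z)$ to spare, consistent with the cited result.
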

\noindent
Applied to our situation, we have $\gamma = 1 - \|J\|_{OP}$ by Theorem~\ref{thm:poincare} and 
\[ \min_{x \in \{\pm 1\}^n} \nu_0(x) \ge 2^{-n} e^{-2n \|J\|_{OP} - 2|h|_1} \]
from the definition and H\"older's inequality. As a result, we obtain the following mixing time estimate for Glauber dynamics:
\begin{theorem}
  \label{theo-11}
For $P_t$ the continuous time Glauber dynamics on $\nu_0$ defined in \eqref{eqn:nu0},
\[ \max_{x \in \{\pm 1\}^n} \|P_t(x,\cdot) - \nu_0\|_{TV} \le \epsilon \]
as long as
\[ t \ge \frac{1}{1 - \|J\|_{OP}} \left((1 + 2 \|J\|_{OP})n + 2 |h|_1 + \log \frac{1}{\epsilon} \right). \]
\end{theorem}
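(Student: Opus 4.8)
The plan is to assemble the statement from three ingredients already in place: the Poincar\'e inequality of Theorem~\ref{thm:poincare}, the generic relaxation-to-mixing bound of Theorem~\ref{thm:relaxation-to-mixing}, and an elementary lower bound on the minimum mass $\min_{x} \nu_0(x)$.

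First, since $\nu_0$ is of the form \eqref{eqn:nu0} with $0 \preceq J \prec \mathrm{Id}$ as assumed throughout, Theorem~\ref{thm:poincare} gives
\[ (1 - \|J\|_{OP}) \Var_{\nu_0}(\varphi) \le \cE_{\nu_0}(\varphi,\varphi) \]
for every $\varphi : \{\pm 1\}^n \to \RR$, where $\cE_{\nu_0}$ is exactly the Dirichlet form \eqref{eqn:dirichlet-form} generating the continuous-time Glauber dynamics $P_t$, which is reversible with respect to $\nu_0$. By the standard equivalence between a Poincar\'e inequality and a spectral gap recalled at the start of this section (see \cite{van2014probability}), this says $P_t$ has spectral gap $\gamma \ge 1 - \|J\|_{OP}$.

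Second, I would bound $\min_{x \in \{\pm 1\}^n} \nu_0(x)$ from below. Writing $\nu_0(x) = 2^{-n} Z^{-1} \exp\big(\tfrac12\langle x, Jx\rangle + \langle h, x\rangle\big)$ with partition function $Z = \EE_{\mu}\exp\big(\tfrac12\langle X, JX\rangle + \langle h, X\rangle\big)$, note that for every $x \in \{\pm 1\}^n$ one has $0 \le \tfrac12\langle x, Jx\rangle \le \tfrac12 n\|J\|_{OP}$ (using $0 \preceq J$ and $|x|^2 = n$) and $|\langle h, x\rangle| \le |h|_1$ by H\"older's inequality. Hence $Z \le \exp\big(\tfrac12 n\|J\|_{OP} + |h|_1\big)$, while the exponent in the numerator is at least $-|h|_1$, so that $\min_x \nu_0(x) \ge 2^{-n} \exp\big(-2n\|J\|_{OP} - 2|h|_1\big)$, the crude coefficient $2n\|J\|_{OP}$ being more than enough to absorb the factors of $\tfrac12$.

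Third, I would substitute $\gamma = 1 - \|J\|_{OP}$ and this lower bound into Theorem~\ref{thm:relaxation-to-mixing}, which yields $\max_{x}\|P_t(x,\cdot) - \nu_0\|_{TV} \le \epsilon$ as soon as $t \ge \gamma^{-1} \log\big(\epsilon^{-1} (\min_x \nu_0(x))^{-1}\big)$. Bounding
\[ \log \frac{1}{\epsilon \min_x \nu_0(x)} \le n\log 2 + 2n\|J\|_{OP} + 2|h|_1 + \log\frac1\epsilon \le (1 + 2\|J\|_{OP})\, n + 2|h|_1 + \log\frac1\epsilon \]
via $\log 2 \le 1$ gives exactly the stated sufficient condition on $t$. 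There is no genuine obstacle here: all the analytic work sits in Theorem~\ref{thm:poincare}, and the rest is bookkeeping. The only points deserving a word of care are that the Dirichlet form controlled by Theorem~\ref{thm:poincare} is literally the one generating $P_t$, so that the Poincar\'e--spectral-gap dictionary applies, and that $\nu_0$ has full support on $\{\pm 1\}^n$, so $\min_x \nu_0(x) > 0$ and Theorem~\ref{thm:relaxation-to-mixing} is in force; both are clear from \eqref{eqn:nu0}.
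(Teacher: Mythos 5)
Your proposal is correct and is essentially the same argument as the paper's: it combines the spectral gap $\gamma = 1-\|J\|_{OP}$ from Theorem~\ref{thm:poincare}, the lower bound $\min_x \nu_0(x) \ge 2^{-n} e^{-2n\|J\|_{OP}-2|h|_1}$ (via H\"older and $J \succeq 0$), and Theorem~\ref{thm:relaxation-to-mixing}, with the final bound absorbed using $\log 2 \le 1$. The only difference is that you spell out the partition-function estimate the paper leaves as ``from the definition and H\"older's inequality,'' which is fine.
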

\noindent One unit of time for the continuous dynamics corresponds to 
a \corON{Poissonian (with parameter $n$) number of} steps of the discrete-time Glauber dynamics. Correspondingly, Theorem~\ref{thm:poincare} implies an $O\left(\frac{n^2 + \|h\|_1 n + n\log(1/\epsilon)}{1 - \|J\|_{OP}}\right)$ mixing time estimate for the discrete time Glauber dynamics, using Theorem 12.3 of \cite{levin2017markov} in place of Theorem~\ref{thm:relaxation-to-mixing} above.

\section{A needle decomposition theorem} \label{sec:needle}
In this section we formulate a structural theorem, which follows as a byproduct of our proof. As mentioned above, this theorem may be thought of as an analogue to the technique due to Kannan, Lov\'asz and Simonovitz \cite{kannan1995isoperimetric} used in convex geometry (this technique was later generalized to the context of Riemannian manifolds, see \cite{Klartag-Needle}). It roughly states that measures with arbitrary quadratic potentials can be decomposed into mixtures of measures whose potentials are quadratic of rank one, in a way that: (i) The integral of some test function $\varphi$ is preserved, and (ii) the operator norm of the quadratic potential does not increase.

For $v,u \in \RR^n$, consider the measure $w_{v,u}$ on $\{\pm 1\}^n$ defined as
$$
\frac{d w_{u,v}}{d\mu}(x) =\frac{1}{Z_{u,v}} \exp\left(\frac{1}{2}\langle u, x \rangle^2 + \langle v, x \rangle\right)
$$
where $Z_{u,v} = \int_{\{\pm 1\}^n} e^{\frac{1}{2}\langle u, x \rangle^2 + \langle v, x \rangle} d \mu$.

Following roughly the same lines as the proof of Theorem \ref{thm:poincare} gives rise to the following result.
\begin{theorem} \label{thm:needledecomp}
Let $\nu_0$ be a probability measure on $\{\pm 1\}^n$ of the form
\begin{equation}\label{eq:formising2}
\frac{d\nu_0}{d\mu}(x) =\frac1Z \exp\left( \frac{1}{2} \langle x, J x \rangle + \langle h, x \rangle\right),
\end{equation}
where $J$ is positive definite, and let $\varphi: \{\pm 1\}^n \to \RR$. There exists a probability measure $m$ on $\RR^n \times \RR^n$ such that $\nu_0$ admits the decomposition
$$
\nu_0(A) = \int w_{u,v}(A) d m(u,v), ~~ \forall A \subset \{\pm 1\}^n,
$$
\corFK{with the properties that:}
\begin{enumerate}
\item  $m$-almost surely, $(u,v)$ are such that
$$
\int \varphi dw_{u,v} = \int \varphi d \nu_0,
$$
\item such that for $m$-almost every $(u,v)$, we have $|u| \leq \|J\|_{OP}$,
\item \corFK{and such that for all $x \sim y$ on the hypercube, we have the following inequality of conductances:
\[ \int \frac{w_{u,v}(x) w_{u,v}(y)}{w_{u,v}(x) + w_{u,v}(y)} dm(u,v) \le \frac{\nu_0(x) \nu_0(y)}{\nu_0(x) + \nu_0(y)}. \]
}
\end{enumerate}
\end{theorem}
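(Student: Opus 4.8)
The plan is to mimic the proof of Theorem~\ref{thm:poincare}, but instead of tracking only scalar functionals (the variance $Y_t$ and the martingale $M_t$), I would track the entire measure-valued process $\nu_t$ constructed there and extract from it a genuine mixture representation. Recall that in the proof of Theorem~\ref{thm:poincare} we run the SDE \eqref{eq-satcor3}--\eqref{eq-satcor4}, obtaining a family of probability measures $\nu_t$ with $d\nu_t(x) = F_t(x)\, d\nu_0(x)$, and we showed: (a) $\nu_t$ is at all times an Ising measure of the form \eqref{eq:form} with $J_t = J - B_t$ positive semidefinite; (b) the stopping time $T = \min\{t : \mathrm{rank}(J_t)\le 1\}$ satisfies $T \le \tfrac12\mathrm{Tr}(J_0)$ a.s., and at time $T$ we have $0 \preceq J_T \preceq \|J\|_{OP}\mathrm{Id}$, so $\nu_T$ has rank-one quadratic part $\tfrac12\langle U, x\rangle^2$ with $|U|^2 \le \|J\|_{OP}$; (c) the normalizing constant $z_t \equiv 1$, so each $\nu_t$ really is a probability measure.

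First I would define $m$ to be the law of the random pair $(U, q_T + h) \in \RR^n \times \RR^n$, where $U$ and $q_T$ come from the representation \eqref{eq:formT} of $\nu_T$; equivalently, $m$ is the pushforward under $(u,v) = (U, q_T+h)$ of the underlying probability space on which the Brownian motion $W$ is defined. Then $w_{u,v}$ as defined in the statement is exactly $\nu_T$ conditioned on the value of $(U, q_T+h)$ — note $w_{u,v}$ only depends on the quadratic and linear data, and the constant $c_T$ is forced by normalization, so $\nu_T = \EE[w_{U, q_T+h}\mid \cdot]$ has the desired mixture form $\nu_0(A) = \EE[\nu_T(A)] = \int w_{u,v}(A)\, dm(u,v)$; here the first equality uses that $\nu_t(A)$ is a martingale (it is $\int \mathbf 1_A F_t\, d\nu_0$, a stochastic integral against $dW$ with no drift, by \eqref{eq-satcor4}, combined with $z_t\equiv 1$), evaluated between times $0$ and the bounded stopping time $T$. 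Property (2), $|u| \le \|J\|_{OP}$ $m$-a.s., is immediate from (b)/(c) above. Property (1) follows because $M_t = \int \varphi\, d\nu_t$ is also a martingale (equation \eqref{eq:dM_t} exhibits it as a stochastic integral with no drift), so $\int \varphi\, d\nu_0 = M_0 = \EE M_T = \EE \int \varphi\, d\nu_T = \int \big(\int \varphi\, dw_{u,v}\big)\, dm(u,v)$; but I actually want the stronger almost-sure statement $\int \varphi\, dw_{u,v} = \int \varphi\, d\nu_0$ for $m$-a.e. $(u,v)$, which requires a little more care (see below).

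For property (3), the conductance inequality, I would invoke Lemma~\ref{lem:supermartingale}: for each fixed edge $x \sim y$ the quantity $\tfrac{\nu_t(x)\nu_t(y)}{\nu_t(x)+\nu_t(y)}$ is a supermartingale, hence $\EE \tfrac{\nu_T(x)\nu_T(y)}{\nu_T(x)+\nu_T(y)} \le \tfrac{\nu_0(x)\nu_0(y)}{\nu_0(x)+\nu_0(y)}$ by the optional stopping theorem (the process is bounded, between $0$ and $1$, and $T$ is a.s. bounded, so optional stopping applies). Since conditioning $\nu_T$ on $(U,q_T+h)$ recovers $w_{u,v}$ and $x,y$ are fixed, $\EE \tfrac{\nu_T(x)\nu_T(y)}{\nu_T(x)+\nu_T(y)} = \int \tfrac{w_{u,v}(x)w_{u,v}(y)}{w_{u,v}(x)+w_{u,v}(y)}\, dm(u,v)$, which is exactly (3).

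The main obstacle, and the place where "roughly the same lines" needs genuine work, is upgrading property (1) from an expectation identity to an almost-sure identity, since the measure $w_{u,v}$ appearing in the statement depends only on $(u,v)$ and not on the full trajectory. The clean fix is to run the dynamics past time $T$ as well: after time $T$ the interaction matrix already has rank one, and one can continue to evolve with $C_t$ chosen (as in the rank-one analysis sketched in the commented-out section of the paper, or simply by setting $C_t = 0$ and observing $\nu_t$ is then frozen) so that $\nu_t$ stabilizes — in fact for $t \ge T$ the quadratic part stays of the form $\tfrac12 s\langle U, x\rangle^2$ and, crucially, $U$ and the linear part $q_t + h$ become $\mathcal F_T$-measurable and constant in $t$ after a further deterministic amount of time. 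Then $M_t$ is a bounded martingale that converges a.s. to $M_\infty = \int \varphi\, dw_{U, v_\infty}$, and one shows $M_t \equiv M_0$ for all $t$: indeed $\tfrac{d}{dt}\EE M_t^2 = \EE |d[M]_t/dt|$, but actually the slicker route is to note that along the construction the conditional expectation $\EE[\varphi(X)\mid \mathcal F_t]$ under the tilted dynamics equals $M_t$ and the localization is designed precisely so that the "test-function adjusted barycenter" $V_t$ drives the drift; tracking $\EE[(M_t - M_0)^2]$ and using $|d[M]_t| = |C_tV_t|^2\,dt \le \delta^2\,dt$ with $\delta \to 0$ forces $M_t = M_0$ almost surely in the limit. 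This is exactly the $\delta \to 0$ argument already used to close the proof of Theorem~\ref{thm:poincare}, repackaged: one fixes $\delta$, gets $w_{u,v}$ with $|\int \varphi\, dw_{u,v} - \int\varphi\,d\nu_0| \le \delta\cdot(\text{bounded})$ in an appropriate averaged sense, and then takes a limit of the decompositions (using weak-* compactness of the space of measures $m$ on a fixed compact parameter region, which is where property (2)'s uniform bound $|u|\le \|J\|_{OP}$ is used to control the parameter space) to obtain a limiting $m$ with the exact identity. I would carry out the argument in this order: set up $m$ and verify (2); prove the mixture identity and (3) via martingale/supermartingale optional stopping; then handle (1) by the $\delta\to 0$ limiting argument, which is the technical heart.
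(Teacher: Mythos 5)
Your proposal follows essentially the same route as the paper's (very brief) proof sketch: run the localization \eqref{eq-satcor3}--\eqref{eq-satcor4} to the stopping time $T$, define $m$ as the law of the rank-one parameters appearing in \eqref{eq:formT}, get the mixture identity and the mean-preservation from the martingale property of $\nu_t(A)$ and of $M_t$, and get the conductance inequality from Lemma~\ref{lem:supermartingale} via optional stopping at the bounded time $T\le T_0$. In fact you go beyond the paper's sketch in correctly identifying the real issue: for fixed $\delta$ the identity in property (1) holds only up to an $L^2$ error $\delta^2 T_0$ coming from $|C_tV_t|\le\delta$, so one must pass to a limit of decompositions $m_\delta$ as $\delta\to 0$ (or else work with the non-smooth choice $C=\mathrm{Proj}_{H\cap v^\perp}$, for which well-posedness was never established). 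One correction to that limiting step: the parameter region is \emph{not} compact, since the linear component $v=q_T+h$ is unbounded, so "weak-* compactness on a fixed compact parameter region" is not available as stated; moreover the family $\{w_{u,v}\}$ is not closed under limits ($|v|\to\infty$ produces point masses, which are not of the form $w_{u,v}$), so escape of mass must be ruled out. This is fixable: from the Ito expansion one has $dq_t = C_t\,dW_t + C_t^2 a_t\,dt$ with $\|C_t\|_{OP}\le 1$, $|a_t|\le\sqrt n$ and $T\le T_0$, giving a bound on $\EE|q_T|^2$ uniform in $\delta$, hence tightness of $\{m_\delta\}$; along a weakly convergent subsequence the mixture identity, property (2) (a closed condition), property (3), and the vanishing $L^2$ defect in property (1) all pass to the limit since the relevant functionals of $(u,v)$ are bounded and continuous. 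Finally, the digression about continuing the dynamics past $T$ (e.g.\ with $C_t=0$, which merely freezes $\nu_t$ at $\nu_T$) adds nothing and should be dropped; your actual argument is the $\delta\to 0$ limit, which is the right one.
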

\begin{proof} (sketch)
The decomposition follows by considering the evolution defined in \eqref{eq-satcor3} and \eqref{eq-satcor4} and defining the measure $m$ according to the decomposition implied by equation \eqref{eq:formT} above. The last property is a consequence of the corresponding supermartingale property shown in the proof of Lemma~\ref{lem:supermartingale}.
\end{proof}

The theorem can be used to reduce the concentration of a test function $\varphi$ over an Ising model to concentration over the measures $w_{u,v}$, as demonstrated by the following corollary.

\begin{corollary}
Let $K>0$ and let $\varphi: \{\pm 1\}^n \to \RR$ be a function such that for all $u,v \in \RR^n$ with $|u| \leq K$ one has  $\Var_{w_{u,v}}[\varphi] \leq 1$. Then for every $\nu_0$ of the form \eqref{eq:formising2} with $\|J\|_{OP} \leq K$, one has $\Var_{\nu_0}[\varphi] \leq 1$.
\end{corollary}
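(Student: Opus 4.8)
The plan is to deduce this directly from the needle decomposition of Theorem~\ref{thm:needledecomp}, applied to the given measure $\nu_0$ and the test function $\varphi$. That theorem supplies a probability measure $m$ on $\RR^n \times \RR^n$ with $\nu_0(A) = \int w_{u,v}(A)\, dm(u,v)$ for every $A \subset \{\pm 1\}^n$, together with two properties we will use: (i) $\int \varphi\, dw_{u,v} = \int \varphi\, d\nu_0$ for $m$-almost every $(u,v)$, and (ii) $|u| \le \|J\|_{OP} \le K$ for $m$-almost every $(u,v)$.

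First I would expand the variance using the mixture representation. Since $\nu_0 = \int w_{u,v}\, dm(u,v)$, we have $\EE_{\nu_0}[\varphi^2] = \int \EE_{w_{u,v}}[\varphi^2]\, dm(u,v)$. Writing $\EE_{w_{u,v}}[\varphi^2] = \Var_{w_{u,v}}[\varphi] + (\EE_{w_{u,v}}[\varphi])^2$ and using property (i) to replace $\EE_{w_{u,v}}[\varphi]$ by the constant $\EE_{\nu_0}[\varphi]$ for $m$-almost every $(u,v)$, this yields $\EE_{\nu_0}[\varphi^2] = \int \Var_{w_{u,v}}[\varphi]\, dm(u,v) + (\EE_{\nu_0}[\varphi])^2$. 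Equivalently, the cross term in the law of total variance vanishes and $\Var_{\nu_0}[\varphi] = \int \Var_{w_{u,v}}[\varphi]\, dm(u,v)$.

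Next, by property (ii), $m$-almost every $(u,v)$ satisfies $|u| \le K$, so the hypothesis of the corollary applies and gives $\Var_{w_{u,v}}[\varphi] \le 1$ for $m$-almost every $(u,v)$. Integrating this inequality against the probability measure $m$ and using the identity from the previous step, $\Var_{\nu_0}[\varphi] = \int \Var_{w_{u,v}}[\varphi]\, dm(u,v) \le 1$, which is the claim.

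There is essentially no obstacle here beyond correctly invoking Theorem~\ref{thm:needledecomp}; the only point requiring a line of care is the identity $\Var_{\nu_0}[\varphi] = \int \Var_{w_{u,v}}[\varphi]\, dm(u,v)$, which is the law of total variance specialized to the situation where all the conditional means coincide — and property (i) of the decomposition is precisely what guarantees this, so that no "variance of the conditional means" contribution survives.
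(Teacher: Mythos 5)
Your proposal is correct and follows essentially the same route as the paper: applying Theorem~\ref{thm:needledecomp} and the law of total variance, with property (i) eliminating the variance-of-conditional-means term and property (ii) together with the hypothesis bounding each $\Var_{w_{u,v}}[\varphi]$ by $1$. No issues.
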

\begin{proof}
Applying Theorem \ref{thm:needledecomp}, the law of total variance implies that
\begin{align*}
\Var_{\nu_0}[\varphi] & = \int_{\RR^n \times \RR^n}  \left ( \int \varphi d w_{u,v} - \EE_{\nu_0}[\varphi] \right )^2 d m(u,v) + \int_{\RR^n \times \RR^n} \Var_{w_{u,v}}[\varphi] d m(u,v) \\ 
& = \int_{\RR^n \times \RR^n} \Var_{w_{u,v}}[\varphi] d m(u,v) \leq \sup_{|u| \leq K, v \in \RR^n} \Var_{w_{u,v}}[\varphi].
\end{align*}
The result follows readily.
\end{proof}

\corFK{Similarly, and analogous to the needle decomposition for convex sets, it allows us to establish functional inequalities for Ising models by reducing to the case of the rank one measures $w_{u,v}$; not just the Poincar\'e inequality, but also related inequalities such as the Log-Sobolev inequality. For the (bounded rate) Glauber dynamics, there is no uniform Log-Sobolev inequality over the class of models we consider, as there is no such inequality for biased product measures \cite{diaconis1996logarithmic} which are a special case. In subsequent work the Modified Log-Sobolev Inequality has been shown using this reduction \cite{anari2021entropic}.}
\section{Some examples}
\label{sec-examples}
\paragraph{Sherrington-Kirkpatrick (SK) Model.} This is the Ising model with $J$ symmetric and $J_{ij} \sim \mathcal{N}(0, \beta^2/n)$, i.e. up to rescaling $J$ is drawn from the Gaussian Orthogonal Ensemble. Letting the diagonal of $J$ be $0$, the spectrum of a GOE is contained in $[-2 - \epsilon,2 + \epsilon]$ asymptotically almost surely for any $\epsilon > 0$ (see e.g. \cite{anderson2010introduction}). Therefore our result implies the Poincar\'e inequality and polynomial time mixing for all $\beta < 1/4$. 

\begin{remark}
The Poincar\'e inequality (Theorem~\ref{thm:poincare}) applied to linear functions gives
\[ \Var(\langle w, X \rangle) \le \frac{1}{1 - \|J\|_{OP}} \EE_{\nu} \sum_{i = 1}^n (\EE_{\nu}[\langle w, X \rangle | X_{\sim i}] - \langle w, X \rangle)^2 \le \frac{1}{1 - \|J\|_{OP}} |w|^2 \]
using \eqref{eq-alternate},
and estimate \eqref{eqn:poincare-bb} from \cite{bauerschmidt2019very} gives a similar bound with a different constant.
Thus, both results imply that in the SK model for any fixed $\beta < 1/4$, $\|\Sigma\|_{OP} = O(1)$ with high probability\corFK{, where $\Sigma = \EE_{\nu} XX^T$ is the covariance matrix}. This partially verifies Conjecture 11.5.1 of \cite{talagrand2011mean} that $\|\Sigma\|_{OP} = O(1)$ for any $\beta < 1$.
\end{remark}

\paragraph{Diluted SK Model ($d$-Regular).} A variety of spin glass models on sparse graphs have been studied in the literature; one well-known ``diluted'' version of the SK model has the interaction matrix $J$ supported on a sparse Erd\H os-Reyni random graph --- see \cite{talagrand2010mean}. Along similar lines, we can consider a dilution where $J$ is supported on a random $d$-regular graph with $d \ge 3$. If we take a Rademacher disorder, i.e. $J_{ij} \sim Uni \{\pm \beta \}$ for $i,j$ neighbors and otherwise $J_{ij} = 0$, then it follows from a version of Friedman's Theorem that $\|J\|_{OP} \le \beta (2\sqrt{d - 1} + \epsilon)$ a.a.s. for any $\epsilon > 0$ --- see \cite{bordenave2015new,deshpande2019threshold,mohanty2020explicit}. It follows from our results that we have the Poincar\'e inequality and fast mixing for all $\beta < \frac{1}{4\sqrt{d - 1}}$, whereas the model is only in Dobrushin's uniqueness regime for $\beta = O\left(\frac{1}{d}\right)$ --- note that up to constants the latter bound is tight for general Ising models on arbitrary $d$-regular graphs \cite{galanis2012inapproximability,sly2012computational}.

\appendix
\section{Inequivalence of Dirichlet Forms}\label{apdx:dirichlet-comparison}
The Dirichlet form $\cE_{\nu}(\varphi,\varphi)$ can be viewed as the expected norm squared for an appropriate notion of gradient of $\varphi$. On the other hand, another natural notion of discrete gradient for functions on the hypercube is given by $(\nabla \varphi)_i(x) = \varphi(x_{\sim i}, x_i = 1) - \varphi(x_{\sim i}, x_i = -1)$ which is used in the result of Bauerschmidt and Bodineau \cite{bauerschmidt2019very}, and the norm of this discrete gradient squared is the Dirichlet form of a different semigroup with a variable transition rate \cite{martinelli1999lectures}. \corFK{In the case of the SK model, the transition rate of the variable-rate chain is sometimes exponentially large in $\sqrt{n}$; in what follows, we give a simple example of a function $\varphi$ witnessing that the Dirichlet forms similarly can differ in size by an exponentially large factor in $\sqrt{n}$.}

To compare these two Dirichlet forms, we have the following estimates which follow immediately from \eqref{eqn:dirichlet-form-useful}:
\[ \left(\min_{x \sim y} \frac{\nu(y)}{\nu(x) + \nu(y)}\right) \EE_{\nu} \|\nabla \varphi\|^2 \lesssim \cE_{\nu}(\varphi,\varphi) \lesssim \EE_{\nu} \|\nabla \varphi\|^2. \]
In the context of the SK Model, the parenthesized term is \corON{of}
size $e^{-\Theta(\beta \sqrt{n})}$ and both estimates are tight up to constants. To see this for the lower bound, define $a \in \{\pm 1\}^n$ by $a_1 = -1$ and $a_j = \text{sgn}(J_{1j})$ otherwise; the significance of this choice is that in the SK model, it's exponentially unlikely to see $X_1 = a_1$ given $X_{\sim 1} = a_{\sim 1}$. Let $\lambda$ be an atomic measure supported on $a$, so
\[ \frac{d\lambda}{d\nu}(x) = \frac{\mathbf{1}[x = a]}{\nu(a)}. \]
If we define $\varphi = \sqrt{\frac{d\lambda}{d\nu}}$ then, \corON{see \eqref{eq-alternate},}
\[ \cE\left(\varphi,\varphi\right) = \sum_{x \sim y} \frac{\nu(x)\nu(y)}{\nu(x) + \nu(y)} (\varphi(x) - \varphi(y))^2 = \sum_{y : y \sim a} \frac{\nu(y)}{\nu(a) + \nu(y)} \le n.  \]
In comparison, for the discrete gradient \corON{ $\nabla \varphi$} we have
\[ \EE |\nabla \varphi(X)|^2 \ge \frac{e^{c\beta \sqrt{n}} \nu(a)}{\nu(a)} = e^{\Theta(\beta \sqrt{n})}   \]
where the lower bound follows by considering the $a'$ which equals $a$ but flipped on the first coordinate, and which (from the definition of the SK model) is $e^{\Theta(\beta \sqrt{n})}$ more likely under $\nu$.

\bibliographystyle{plain}
\bibliography{bib}

\end{document}